\theoremstyle{plain}
  \newtheorem{theorem}{Theorem}
  \newtheorem{proposition}{Proposition}
 \newtheorem{observation}{Observation}
  \newtheorem*{conjecture*}{Conjecture}
\theoremstyle{definition}
  \newtheorem{definition}{Definition}
  \newtheorem*{convention*}{Convention}
\theoremstyle{remark}
\begin{document}

\title [Cyclic theory for CDGA's and string cohomology , ] 
      {Cyclic theory for commutative differential graded algebras and s--cohomology. }

\author{Dan Burghelea}


\address{
         Dept. of Mathematics, 
         The Ohio State University, 
         231 West Avenue, Columbus, OH 43210, USA.}

\email{burghele@mps.ohio-state.edu}

\keywords{}


\date{\today}
\begin{abstract} 
In this paper one considers  three homotopy functors on the category of manifolds , $hH^\ast, cH^\ast, sH^\ast,$ and parallel them with other three homotopy functors on the category of connected commutative differential graded algebras, $HH^\ast, CH^\ast, SH^\ast.$ If $P$ is a smooth 1-connected manifold and the algebra is the de-Rham algebra of $P$ the two pairs of functors agree but in general do not.
The functors $ HH^\ast $ and $CH^\ast$   can be also derived as Hochschild resp. cyclic homology of 
commutative differential graded algebra, but this is not the way they are introduced here. The third $SH^\ast ,$ although inspired from negative cyclic homology,  can not be identified with any sort of cyclic homology of any algebra. The functor $sH^\ast$ might play some role in topology. Important tools in the construction of the functors $HH^\ast, CH^\ast $and $SH^\ast ,$ in addition to the linear algebra suggested by cyclic theory, are  Sullivan minimal model theorem and the "free loop" construction 
described in this paper. 
\end{abstract}

\maketitle

\setcounter{tocdepth}{1}

\hskip .8 in {(dedicated to A. Connes for his 60-th birthday)}

\tableofcontents
 \section{Introduction}
  \label{S:intro}

This paper deals with commutative  graded  algebras and the results are of significance in  
``commutative" geometry/topology. However they were inspired largely by the  linear algebra underlying Connes' cyclic theory.  The topological results  formulated here, 
Theorem \ref{T2} and Theorem \ref{T3},  were first established as a consequence of the identification of the cohomology 
resp. $S^1-$equivariant cohomology  of the free loop spaces of a 1-connected smooth manifold with the Hochschild resp. cyclic homology of its de-Rham algebra, cf. \cite {J}, \cite{BFG}, \cite{B2}.  In this paper  this identification is circumvented. Still,  the results   illustrate the powerful influence of Connes'  mathematics in areas outside the non commutative geometry. 

In this paper, inspired by the relationship between Hochschild, cyclic and negative cyclic homology of a unital algebra, one considers two systems of graded vector space valued homotopy functors $hH^\ast, cH^\ast, sH^\ast $and $HH^\ast, CH^\ast, SH^\ast$  and investigate their relationship. 
The first  three functors are defined on the category of smooth manifolds and smooth maps via the free loop space $P^{S^1}$ of a smooth manifold $P,$ which is a smooth $S^1$-manifold of infinite dimension.   The next three functors are  
 defined on the category of connected commutative differential graded algebras via an algebraic analogue of ``free loop"  construction  and via  Sullivan minimal model  theorem, Theorem  \ref{T1}. The relationship between them is suggested  by the general nonsense- diagram Fig 2 in section \ref{SMC}.

When applied to the De--Rham algebra of a 1-connected smooth manifold the last three functors take the same values as the first three. 
This is not the case when the smooth manifold is not 1-connected; the exact relationship will be addressed in a future work.

The first three functors are based on a formalism  (manipulation with differential forms) which can be considered for any smooth  (finite or infinite dimensional) manifold  $M$ and  any smooth vector field $X$ on $M.$  However it seems to be  of relevance when the vector field $X$ comes from a smooth $S^1-$action on $M.$ This is  of  mild  interest if the manifold is of finite dimension but  more  interesting  when the manifold is of infinite dimension. In particular,  it is quite interesting  when $M= P^{S^1},$  the free loop space of $P,$ and the  action is the canonical $S^1-$action on $P^{S^1}.$
Manipulation with differential forms on $P^{S^1}$  leads to the graded vector spaces   $hH^\ast (P)$, 
$cH^\ast (P),$ $sH^\ast(P) $ with the first two  being the cohomology resp. the $S^1-$equivariant cohomology of $P^{S^1}$ but  $sH^\ast$ being  a new homotopy functor, referred here as  s--cohomology. 

This functor was first  introduced in \cite {B1}, \cite{B2} but so far not 
seriously investigated. 
The functor $sH^\ast$ relates, at least in the case of a 1-connected manifold $P$, the Waldhaussen algebraic $K-$theory  of $P$ and the Atiyah--Hirtzebruch complex $K-$theory  (based on complex vector bundles) of $P.$  It has  a rather simple description in terms of infinite sequence of smooth invariant 
differential forms on $P^{S^1}.$ 

The additional structures on $P^{S^1},$  the power maps $\psi_k$  $k=1,2,
\cdots,$ and the involution $\tau= \psi_{-1},$  provide endomorphisms of
$hH^\ast (P)$, $cH^\ast (P), sH^\ast(P) $  whose eigenvalues and eigenspaces are interesting
issues.  They are  clarified only when $P$ is 1-connected. This  is done in view of the relationship  with the functors $HH^\ast, CH^\ast, SH^\ast.$

 It might be only a coincidence but still a  appealing observation that the symmetric resp. 
 antisymmetric part of  $sH^\ast (P)$ w.r. to the canonical involution $\tau$ calculates,  for a 1-connected manifold  $P$ and  in the stability   range,  the vector space  $Hom (\pi_\ast(H/\textit{Diff} (P), \ \kappa),$ $\kappa=\mathbb R, \mathbb C;$  the symmetric part when  $\dim P$  is even the antisymmetric part when 
 $\dim P$ is odd, cf.  \cite {Bu}, \cite{B1}.  Here $H/\textit{Diff} (P)$ denotes   the (homotopy) quotient  of the space of homotopy equivalences of $P$ by the group of diffeomorphisms with the $C^\infty-$topology.

The functors $HH^\ast , CH^\ast,  SH^\ast$ are the algebraic version of $hH^\ast, cH^\ast, sH^\ast$ and are defined on the category of (homologically connected) commutative differential graded algebras. Their definition uses  the  ``free loop'' construction, an algebraic analogue of the free loop space,  described in this paper only for free connected commutative differential graded algebras $(\Lambda[V], d_V).$ 
A priori these functors are   defined  only for free connected commutative differential graded algebras.
Since they are homotopy functors they  extend to all connected commutative differential graded algebras  via  Sullivan minimal model theorem, Theorem \ref{T1}. 

Using the definition presented here one can take full advantage of
 the  simple form  the algebraic analogue of the power maps  take on the free loop construction. As a consequence one obtains a simple description of  the eigenvalues and eigenspaces of the endomorphisms induced from the algebraic power maps on $HH^\ast$ and $CH^\ast$ and implicitly understand their additional structure. 

The extension of the results of Sullivan--Vigu\'e, cf. \cite{VS}, to incorporate  $S^1-$action and the  power maps in the minimal model of $P^{S^1}$ summarized in Section \ref{S:minimal}  leads finally to results about $hH^\ast, cH^\ast, sH^\ast$  when $P$ is 1-connected, 
cf. Theorem \ref{T3}.

In addition to the algebraic definition  of $HH^\ast, CH^\ast, SH^\ast $  this paper contains the proof of the  homotopy invariance of $sH^\ast.$ 
\vskip .2in

\section {Mixed complexes, a formalism inspired from Connes' cyclic theory}
\label {SMC}
A mixed complex $(C^\ast ,  \delta^\ast,  \beta_\ast)$ consists of a   graded vector space $C^\ast$  ( $\ast$ a non negative integer)  and  linear maps,  $\delta^\ast  : C^\ast \to C^{\ast+1},$ 
$\beta_{\ast+1}: C^{\ast+1}\to C^\ast $ 
which satisfy: 
\begin{equation*}
\begin{aligned}
\delta^{\ast+1}\cdot \delta^{\ast}=&0\\
\beta_{\ast}\cdot \beta_{\ast+1}=&0\\
\beta_{\ast+1} \cdot \delta^\ast + \delta^{\ast-1} \cdot \beta_{\ast}=&0.
\end{aligned}
\end{equation*} 

When  
no risk of confusion 
the index $\ast $ will be deleted and we write $ (C, \delta, \beta) $ instead of $ (C^\ast, \delta^\ast, \beta_\ast) $. 
Using the  terminology of  \cite{B2}, \cite{BV} a mixed complex can be viewed either as  a cochain complex $(C^\ast, d^\ast)$ equipped with an $S^1-$action $\beta_\ast$, or as a chain complex
 $(C^\ast, \beta_\ast)$ equipped with an algebraic $S^1-$action $ \delta^\ast .$

To a mixed complex $(C^\ast, \delta^\ast, \beta_\ast)$ one associates a number of cochain, chain and $2-$periodic  cochain complexes, and then their cohomologies, homologies  and $2-$periodic  cohomologies\footnote{ We will use the word "homology" for a functor derived from a chain complex and ``cohomology" for one derived from a cochain complex. The  $2-$periodic chain and cochain complexes can be identified.}, as follows.

First denote by 
\begin {equation}
\begin{aligned}
^+ C^r= \prod_{ k\geq 0} C^{r-2k}  \ \ & \ \
^- C^r:= \prod_{k\geq 0} C^{r+2k}\\
   \mathbb PC^{2r+1}= \prod_{k\geq 0} C^{2k+1} \ \ & \ \ 
  \mathbb PC^{2r}= \prod_{k\geq 0} C^{2k} \ \text {for any}\  r\\
   PC^{2r+1}= \bigoplus_{k\geq 0} C^{2k+1} \ \ & \ \
    PC^{2r}= \bigoplus_{k\geq 0} C^{2k} \text {for any}\  r.
\end{aligned}
\end{equation}  
 Since our  vector spaces are  $\mathbb Z_{\geq 0}$-graded  the direct product $^+C^r$ involves only finitely many factors. 

Next introduce 
\begin {equation}
\begin{aligned}
^+ D^r_\beta  (w_r, w_{r-2}, \cdots) := & (\delta\omega_r,  (\delta \omega_{r-2} +\beta\omega_r ),\cdots)\\
^+ D^\delta_r (w_r, w_{r-2}, \cdots) := & ((\beta \omega_r+ \delta \omega_{r-2 }), (\beta \omega_{r-2}+ \delta \omega_{r-4 }),\cdots )\\
^- D^r_\beta  (\cdots, w_{r+2}, w_{r}) := & (\cdots, (\beta \omega_{r+4} +\delta \omega_{r+2}), (\beta \omega_{r+2} +\delta \omega_r ))\\ 
^- D^\delta_r (\cdots, w_{r+2}, w_{r}) := & ( \cdots , ( \delta \omega_r + \beta \omega_{r+2 }), \beta \omega_r ) \\
D^{2r}(\cdots, \omega_{2r+2}, \omega_{2r}, \cdots \omega_0)=& (\cdots, (\delta \omega_{2r} + \beta \omega_{2r+2}),\cdots)     \\
D^{2r+1}(\cdots, \omega_{2r+3}, \omega_{2r+1}\cdots\omega_1)= &(\cdots, (\delta \omega_{2k+1}+ \beta \omega_{2k+3}), \cdots).     
\end{aligned}
\end{equation}  
Finally consider  the cochain complexes
$$\mathcal C:= (C^\ast, \delta^\ast), \ \  ^+ \mathcal C_\beta:= (^+C^\ast, ^+D^\ast_\beta),  \ \ ^-\mathcal C_\beta:= (^- C^\ast, ^- D^\ast_\beta),$$
the chain complexes
$$\mathcal H:= (C^\ast, \beta_\ast),\ \ ^+ \mathcal H^\delta:= (^+C^\ast, ^+D^\delta_\ast),  \ \ ^-\mathcal H^\delta:= (^- C\ast, ^- D^\delta_\ast)$$
and the  $2-$periodic cochain complexes \footnote{ here $(\mathbb PC^\ast, D^\ast)$ is regarded as a cochain complex with  $D^\ast$ obtained  from  the degree $+1$  derivation $\delta$ perturbed by the degree $-1$ derivation $\beta$ ; the same complex can be regarded chain complex with  $D^\ast$ obtained  from  the degree $-1$ derivation $\beta$ perturbed by the degree $+1$ derivation $\delta$;
the cohomology for the first is the same as homology for the second }
$$P\mathcal C:= (PC^\ast, D^\ast) ,  \ \  \mathbb P\mathcal C:= (\mathbb PC^\ast, D^\ast)
$$ 
whose cohomology, homology, $2-$periodic  cohomology  are denoted by 

\begin{equation*}
\begin{aligned}
H^\ast: = H^\ast(C,\delta) , \  \   \ & ^+H^\ast_\beta: = ^+H^\ast_\beta (C,\delta, \beta) , 
\ &^-H^\ast_\beta&:=  ^-H^\ast_\beta (C,\delta, \beta), \\
H_\ast := H_\ast (C, \beta) , \  \   \ &    ^ +H^\delta_\ast :=  ^+H^\delta_\ast (C,\delta, \beta),            
\ &^-H^\delta_\ast&:= ^-H^\delta_\ast (C,\delta, \beta),  \\
PH^\ast:= PH^\ast (C,\delta, &\beta) ,   \  &\mathbb PH ^\ast&:= \mathbb PH^\ast (C,\delta, \beta). 
\end{aligned}
\end{equation*}
In this paper the chain complexes $\mathcal H, ^\pm {\mathcal H}^\delta, $ will only be used to derive conclusions about the cochain complexes  $\mathcal C,   ^\pm {\mathcal C}_\beta, \ \mathbb P \mathcal C.$

 The obvious inclusions and projections lead to the  following commutative 
diagrams of short exact sequences 
\vskip .1in

\diagram 
&&&0 \rto & ^- \mathcal H^\delta_{\ast }\rto \dto &\mathbb P \mathcal C^\ast \rto \dto & ^+\mathcal H^\delta_{\ast-2} \rto,\dto^{id} &0\\
&&&0 \rto & \mathcal H_{\ast} \rto         & ^+\mathcal H^\delta_{\ast } \rto        & ^+\mathcal H^\delta_{\ast-2} \rto&0
\enddiagram

\vskip .2in

\diagram 
&&&0 \rto & ^+ \mathcal C^{\ast-2}_\beta  \rto^{i^{\ast-2}}\dto^{id} & ^+\mathcal C^\ast_\beta\rto\dto^{I^{\ast}} & \mathcal C^\ast\rto \dto &0\\
&&& 0 \rto &^+ \mathcal C^{\ast -2}_\beta \rto^{I^{\ast-2}} &\mathbb P \mathcal C^\ast \rto & ^-\mathcal C^\ast_\beta \rto &0.
\enddiagram

They  give 
 rise to the following commutative diagram of long exact sequences 
\vskip .1in

\diagram
\cdots \rto&^-H_{r}^\delta\rto\dto^{h_r} & \mathbb PH(r) \rto \dto & ^+H_{r-2}^\delta\rto\dto^{id}                 & ^-H_{r-1}^\delta   \rto \dto^{h_{r-1}}  &  \cdots\\
\cdots \rto&H_{r} \rto^{J_r}        & ^+H_{r}^\delta\rto^{S_r }                           & ^+H_{r-2}^\delta\rto^{B_{r-2}}                        & H _{r-1} \rto  &\cdots 
\enddiagram
\vskip .1in
\hskip 2in Fig 1.

\vskip .2in
\diagram
\cdots \rto&^+H^{r-2}_\beta\rto^{S^{r-2}}\dto^{id} & ^+H^{r}_\beta \rto \dto^{\mathbb I^\ast} & H^{r}\rto^{B^r}\dto^{i^r}                 & ^+H_\beta^{r-1}   \rto \dto^{id}  &  \cdots\\
\cdots \rto&^+H^{r-2}_\beta\rto ^{\mathbb I^{r-2}}       & \mathbb PH^r\rto ^{\mathbb J^r}                           & ^-H^{r}_\beta\rto ^{\mathbb B^r}                       & ^+H^{r-1}_\beta  \rto  &\cdots 
\enddiagram
\vskip .1in
\hskip 2in Fig 2.

and 

\diagram & ^+H^{r-2}_\beta\dto^{\mathbb I^{r-2}} \rto^{S^{r-2}} &^+H^{r}_\beta\dlto^{\mathbb I^r}\dto \\
&\mathbb PH^r = \mathbb PH^{r+2} &  
{\underset {\underset S{\rightarrow}} \lim 
 ^+H^{r+ 2k}_\beta}\lto .  
\enddiagram 

\noindent The diagram (Fig1) is the one familiar in the homological algebra of  Hochschild versus
cyclic homologies, cf \cite{Lo}. The diagram  Fig 2 is the one we will use in this paper. 

Note that Hochschild, cyclic, periodic cyclic, negative cyclic homology of an associative unital algebra $A$ as defined in \cite{Lo},   is $H_\ast, ^+H^\delta_\ast, $ $\mathbb PH^\ast, ^-H^\beta_\ast$ of the Hochschild mixed complex  with $C^r: = A^{\otimes (r+1)}$ , $ \beta$ the Hochschild boundary,  and $\delta^r= (1-\tau_{r+1})\cdot s_r\cdot (1 +\tau_r + \cdots \tau^r_r)$ where $\tau_r( a_0\otimes a_1\otimes \cdots a_r)=    
(a_r\otimes a_0\otimes \cdots a_{r-1})$ and $s_r (a_0\otimes a_1\otimes \cdots a_r)= (1\otimes a_0\otimes a_1\otimes \cdots a_r).$

\vskip .1in

A morphism $f: (C^\ast_1,  \delta^\ast _1,\beta^1_\ast) \to (C^\ast _2,  \delta^\ast_2, \beta^2_\ast)$
 is a degree preserving linear map which intertwines $\delta'$s and $\beta'$s.
 It induces  degree  preserving linear maps between any of the homologies /cohomologies  defined above.
The following elementary observations  will be used below.
\begin{proposition}\label{P1}
Let  $(C, \delta, \beta)$ be a mixed cochain complex.

1.$ PH^r = \underset {\underset S{\rightarrow}} \lim ^+H^{r+2k}_\beta,$  where $S^{k+2r}: ^+H^{k+2r}_\beta
\to ^+H^{k+2r+2}_\beta$ is induced by the inclusion 
$^+ \mathcal C^{\ast}_\beta  \to  ^+\mathcal C^{\ast+2}_\beta.$ 

2. The following is an exact sequence 

\diagram 
&0 \rto &\underset {\underset S {\leftarrow}} { \lim'} \ ^+H^\delta _{r-1 +2k} \rto &\mathbb P H^r \rto, & \underset{\underset S {\leftarrow}}\lim \ ^+H^\delta_{r+2k}\rto& 0,
\enddiagram  with $S_{k+2r}: ^+H_{k+2r}^\delta
\to ^+H_{k+2r-2}^\delta$  induced by the projection $^+\mathcal H^\delta_\ast\to ^+\mathcal H^\delta_{\ast-2}.$ 

Let $f^\ast:(C^\ast_1, \delta^\ast _1, \beta^1_\ast) \to (C^\ast_2, \delta^\ast _2, \beta^2_\ast)$ be a morphism of mixed complexes.

3. If $H^\ast(f)$  is an isomorphism then so is $^+H^\ast_\beta (f)$ and $PH^\ast (f).$

4. If $H_\ast (f)$ is an isomorphism then so is $^+H^\delta_\ast (f)$ and $\mathbb PH^\ast(f).$

5. If $H^\ast(f)$ and $H_\ast(f)$ are both isomorphisms then, in addition to the conclusions in (3) and (4), $^-H^\ast_\beta(f)$ is an isomorphism.
\end{proposition}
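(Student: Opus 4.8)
The plan is to treat the five assertions in the order given, since (5) is a purely formal consequence of (3) and (4), while (3) and (4) rest on (1), (2) together with the short and long exact sequences already displayed in Fig 1 and Fig 2. Throughout I would use three standard tools: exactness of filtered direct limits, the Milnor ${\varprojlim}^1$-sequence for towers, and the five lemma applied to the long exact sequences of Fig 1 and Fig 2. The essential structural input is that $C^\ast$ is $\mathbb{Z}_{\geq 0}$-graded, so that $^+C^r = 0$ and hence $^+H^r_\beta = {}^+H^\delta_r = 0$ for $r < 0$; this vanishing is what makes the inductive comparison arguments well-founded.

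For (1), observe that the inclusions $^+\mathcal{C}^\ast_\beta \to {}^+\mathcal{C}^{\ast+2}_\beta$ exhibit $P\mathcal{C} = (PC^\ast, D^\ast)$, with its $\bigoplus$ in place of $\prod$, as the filtered colimit $\varinjlim_S {}^+\mathcal{C}^{\ast+2k}_\beta$. Since cohomology commutes with filtered direct limits, $PH^r = \varinjlim_S {}^+H^{r+2k}_\beta$, the transition maps being the $S$'s as claimed. For (2), dually, $\mathbb{P}\mathcal{C}$ is the inverse limit $\varprojlim_S {}^+\mathcal{H}^\delta_{\ast+2k}$ of the tower whose structure maps are the projections $^+\mathcal{H}^\delta_\ast \to {}^+\mathcal{H}^\delta_{\ast-2}$; applying the Milnor exact sequence to this tower of chain complexes yields exactly the short exact sequence of (2), with $\varprojlim$ and ${\varprojlim}^1$ of the $^+H^\delta_{r+2k}$ as outer terms.

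For (3), feed the morphism $f$ into the top long exact sequence of Fig 2, coming from the short exact sequence $0 \to {}^+\mathcal{C}^{\ast-2}_\beta \to {}^+\mathcal{C}^\ast_\beta \to \mathcal{C}^\ast \to 0$. Around $^+H^r_\beta$ the relevant five-term segment is $H^{r-1} \to {}^+H^{r-2}_\beta \to {}^+H^r_\beta \to H^r \to {}^+H^{r-1}_\beta$. By hypothesis $f$ is an isomorphism on the $H^\ast$-terms, and by induction on $r$ (the case $r<0$ being trivial since all groups vanish) it is an isomorphism on $^+H^{r-2}_\beta$ and $^+H^{r-1}_\beta$; the five lemma then gives that $^+H^r_\beta(f)$ is an isomorphism for every $r$. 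The claim for $PH^\ast(f)$ follows at once from (1), a filtered colimit of isomorphisms being an isomorphism. Part (4) is the exact dual: I would run the same upward induction on the bottom long exact sequence of Fig 1, coming from $0 \to \mathcal{H}_\ast \to {}^+\mathcal{H}^\delta_\ast \to {}^+\mathcal{H}^\delta_{\ast-2} \to 0$, to deduce $^+H^\delta_\ast(f)$ from $H_\ast(f)$; then $\mathbb{P}H^\ast(f)$ is an isomorphism by (2), since $f$ being an isomorphism of the whole tower forces isomorphisms on both $\varprojlim$ and ${\varprojlim}^1$, and one finishes with the five lemma on the Milnor sequence.

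Finally, for (5) I would invoke the bottom long exact sequence of Fig 2, arising from $0 \to {}^+\mathcal{C}^{\ast-2}_\beta \to \mathbb{P}\mathcal{C}^\ast \to {}^-\mathcal{C}^\ast_\beta \to 0$, whose segment around $^-H^r_\beta$ is ${}^+H^{r-2}_\beta \to \mathbb{P}H^r \to {}^-H^r_\beta \to {}^+H^{r-1}_\beta \to \mathbb{P}H^{r+1}$. The two $^+H_\beta$-terms are handled by (3) using $H^\ast(f)$, and the two $\mathbb{P}H$-terms by (4) using $H_\ast(f)$; the five lemma then yields that $^-H^\ast_\beta(f)$ is an isomorphism, and no separate induction is needed here. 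This also explains why both hypotheses are indispensable: one controls the $^+H_\beta$ end of the sequence and the other the periodic end. I expect the only genuine obstacle to lie in (2) — verifying that the displayed connecting maps really realize the Milnor sequence and that no ${\varprojlim}^1$ subtlety is lost — together with keeping the degree shifts straight in the five-lemma segments; the logical core, by contrast, is entirely formal once the exact sequences of Fig 1 and Fig 2 are in hand.
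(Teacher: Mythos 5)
Your proposal is correct and takes essentially the same route as the paper's proof: (1) and (2) via commutation of cohomology with direct limits and the Milnor $\varprojlim^1$-sequence for the tower of chain complexes (the paper cites Loday 5.1.9 for the latter), and (3)--(5) by degree-wise induction and the five lemma applied to the first exact sequence of Fig.~2, the second of Fig.~1, and the second of Fig.~2 respectively. You simply make explicit what the paper leaves implicit, namely the vanishing of $^+H^r_\beta$ and $^+H^\delta_r$ in negative degrees (from the $\mathbb{Z}_{\geq 0}$-grading) that grounds the inductions, and the exact five-lemma segments used at each step.
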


\begin{proof}

(1): Recall that  a  direct  sequence of  cochain complexes 

$$\mathcal C^\ast_0\overset {i_0} \rightarrow \mathcal C^\ast_1 \overset {i_1} \rightarrow \mathcal C^\ast_2 \overset {i_2} \rightarrow \cdots$$
induces, by passing to cohomology,  the direct sequence 
$$H^\ast (\mathcal C^\ast_0)\overset {H(i_0)} \rightarrow H^\ast(\mathcal C^\ast_1) \overset {H(i_1)} \rightarrow H^\ast(\mathcal C^\ast_2) \overset {i_2} \rightarrow \cdots$$
 and that $H^j (\underset{\rightarrow} \lim \mathcal C^{\ast}_i)= \underset{\rightarrow}\lim H^j (\mathcal C^\ast_i)$ for any $j.$ 

(2): Recall  that an inverse  sequence of  chain complexes
$$\mathcal H_\ast^0\overset {p_0} \leftarrow \mathcal H_\ast^1 \overset {p_1} \leftarrow \mathcal H_\ast^2 \overset {p_2} \leftarrow \cdots$$
induces, by passing to homology,  the sequence 
$$H_\ast (\mathcal H^\ast_0)\overset {H(p_0)} \leftarrow H_\ast(\mathcal H^\ast_1) \overset {H(p_1)} \leftarrow H_\ast(\mathcal H^\ast_2) \overset {p_2} \leftarrow \cdots$$
 and the following short exact sequence cf.\cite {Lo}   5.1.9.

\diagram 
&0 \rto &\underset {\leftarrow} { \lim' } H _{j-1} (\mathcal H^i_\ast) \rto & H_j ( \underset {\leftarrow} \lim \mathcal H^i_\ast) \rto& \underset {\leftarrow} \lim  H _j (\mathcal H^i_\ast)\rto &0 
\enddiagram 
\noindent for any $j.$ 

Item (3) follows by induction on  degree from the naturality of the first exact sequence in the diagram Fig 2 and (1). 

Item (4) follows by induction from the naturality of the  second exact sequence of the  diagram Fig 1 and from (2).

Item (5) follows from the naturality of the second exact sequence in diagram Fig 2 and from (3) and (4).
\end{proof}

The mixed complex $(C^\ast, \delta^\ast, \beta_\ast)$ is called  $\beta-$acyclic if  $\beta_1$ is surjective and $\ker (\beta_r)= \text{im} (\beta_{r+1}).$
If so 
consider the diagram whose rows are short exact sequences of cochain complexes 

\diagram
0\rto&(\text {Im} (\beta)^\ast , \delta^\ast )\drto ^{j} \rto^ i &(C^\ast , \delta^\ast ) \rto^\beta \drto^{ id} & ((\text {Im} (\beta))^{\ast-1},  \delta ^{\ast-1})\rto &0   \\
0\rto&(^+C^{\ast-2}_\beta, ^+D^{\ast-2}_\beta) \rto^{i^{\ast-2}}&(^+C^\ast_\beta, ^+D^\ast_\beta) \rto&  (C^\ast,\delta^\ast)\rto&0
 \enddiagram 
Each  row induces the long exact sequence in the  diagram below and a simple inspection of boundary map in these long exact sequences permits to construct linear maps $\theta^r$ and to verify that the diagram below is commutative.

\diagram 
H^{r-2}(\text{Im} (\beta),\delta)\dto^{\theta^{r-2}} \rto &H^{r}(\text{Im} (\beta),\delta)\dto^{\theta^{r}} \rto &H^r(C,\delta)\dto^{id} \rto &H^{r-1}(\text{Im} (\beta),\delta)\dto^{\theta^{r-1}} \\
^+ H^{r-2}_{\beta}(C,\delta, \beta)\rto &^+H^{r}_{\beta}(C,\delta, \beta)\rto&H^r(C,\delta)\rto&
 ^+H^{r-1}_{\beta}(C,\delta, \beta)
\enddiagram

As a consequence  one verifies  by induction on degree that the inclusion 
\newline $j: (\text{Im} \beta, \delta)\to (^+ C, ^+ D_\beta) $ induces an isomorphism  
$H^\ast  (\text{Im}(\beta), \delta)\to  ^+H^\ast_{\beta}(C,\delta, \beta).$ 
 \vskip .1in

{\bf Mixed complex with power maps and involution}

A collection of degree zero (degree preserving) linear maps $\Psi_k, k=1,2.\cdots , \tau:= \Psi_{-1}$ which satisfy
\begin{enumerate}
\item $\Psi_k \circ \delta= \delta\circ \Psi_k,$
\item $\Psi_k \circ \beta= k \beta \circ \Psi_k,$
\item $\Psi_k \circ \Psi_r= \Psi_r \circ \Psi_k= \Psi_{kr},  \ \   \Psi_1 =id$
\end{enumerate}
will be referred to as ``power maps and involution", or simpler "power maps", $\Psi_k, k=-1, 1,2, \cdots .$
\footnote {We use the notation $\tau$ for $\Psi_{-1}$ to emphasize  that  is an involution and to suggest  consistency with other familiar involutions in homological algebras and topology.}. They provide  the morphisms of cochain complexes 
\begin{equation*}
\begin{aligned}
\Psi_k: \mathcal C \to& \mathcal  C,\\ 
^\pm \Psi_k: ^\pm \mathcal C_\beta \to&  ^\pm \mathcal C_\beta\\ 
 \mathbb P \Psi_k: \mathbb P\mathcal C\to& \mathbb P \mathcal C
 \end{aligned}
 \end{equation*}
defined as follows 

\begin {equation*}
\begin{aligned}
^+ \Psi_k^r  (w_r, w_{r-2}, \cdots)  =  &(\Psi^r (\omega_r), \frac{1}{k} \Psi^{r-2}(\omega_{r-2}) ,\cdots)\\
^- \Psi_k^r  (\cdots, w_{r+2}, w_{r}) := & (\cdots k\Psi_k^{2r+2}(\omega_{r+2} ), \Psi_k^r(\omega_r))\
\end{aligned}
\end{equation*}

and 

\begin{equation*}
\begin{aligned} 
\mathbb P \Psi_k^{2r}(\cdots,  \omega_{2r+2},  \omega_{2r},  \omega_{r-2}, \cdots ,  \omega_0)= \\
=(\cdots, k\Psi_k^{2r+2}(\omega_{r+2}),  \Psi_k^{2r}(\omega_{2r}), 
\frac{1}{k} \Psi_k^{2r-2} (\omega_{2r-2}), \cdots , \frac{1} {k^r} \Psi_k^0 (\omega_0))\\
\mathbb P\Psi_k^{2r+1}(\cdots,  \omega_{2r+3},  \omega_{2r+1},  \omega_{2r-1},\cdots,  \omega_1)=\\ =(\cdots,  k \Psi_k^{2r+3}(\omega_{2r+3}),  \Psi_k^{2r+1} (\omega_{2r+1}), 
\frac{1}{k} \Psi_k^{2r-1} (\omega_{2r-1}), \cdots \frac{1} {k^r} \Psi_k^1 (\omega_1))      
\end{aligned}
\end{equation*}  

Consequently they 
induce the endomorphisms, 
$$\underline \Psi_k^\ast : H^\ast \to H^\ast $$
$$\underline {^\pm \Psi}_k^\ast: ^\pm H^\ast_\beta \to  ^\pm H^\ast_\beta$$
$$\underline {\mathbb P \Psi^\ast}_k: \mathbb PH^\ast \to \mathbb PH^\ast$$

Note that in diagram (Fig2):

$\mathbb J^\ast,$  $J^\ast$ and the vertical arrows intertwine  the endomorphisms induced by $\Psi_k,$ 

$\mathbb B^\ast$ resp. $B^\ast$ intertwine $k ( ^-\underline\Psi_k)$ resp. $k \underline \Psi_k$ with $^+\underline \Psi_k,$

$\mathbb I^{\ast-2}$ resp. $S^{\ast-2}$ intertwine $ ^+\underline \Psi _k$ with  $k \underline{\mathbb P \Psi}_k$ resp. $k  (^+\underline \Psi_k),$  

The  above elementary linear algebra will be applied to CDGA's in the next sections. 
\vskip .2in

\section {Mixed commutative differential graded algebras}\label{SMA}

Let $\kappa$ be a field of characteristic zero (for example $\mathbb Q, \mathbb R, \mathbb C$).

\begin{definition}\label {D1}
\begin{enumerate}

\item A commutative  graded algebra   abbreviated CGA,   is  an associative unital augmentable graded algebra $\mathcal A^\ast,$ (the augmentation is not part of the data)  which is commutative in the graded sense, i.e.
 \begin{equation*}
a_1\cdot a_2= (-1)^{r_1  r_2}a_2\cdot a_1,\ \ 
a_i\in \mathcal A^{r_i}, i=1,2.
\end{equation*}

 \item An exterior  differential $d^\ast_\mathcal A: \mathcal A^\ast \to \mathcal A^{\ast+1},$  is a degree +1-linear map which satisfies
 
\begin{equation*}
 d(a_1\cdot a_2)= d(a_1)\cdot a_2 +(-1)^{r_1} a_1\cdot d(a_2),  a_1\in \mathcal A^{r_1} ,\ \ 
d^{*+1}_{\mathcal  A} d^{*}_{\mathcal  A}=0.
\end{equation*}  

\item An interior differential $\beta^\mathcal A_\ast : \mathcal A^\ast\to \mathcal A^{\ast-1}$ is a degree -1 linear map which satisfies  

\begin{equation*}
\beta(a_1\cdot a_2)= \beta(a_1)\cdot a_2 +(-1)^{r_1} a_1\cdot \beta(a_2),   a_1\in \mathcal A^{r_1},  \ \ 
\beta_{\ast-1}^{\mathcal  A} \beta_{\ast}^{\mathcal  A}=0.
\end{equation*}

\item The exterior and interior differentials $d^\ast$ and $\beta_\ast$ are compatible if 
\begin{equation*}
d^{\ast-1}\cdot \beta_\ast +\beta_{\ast+1}\cdot d^\ast=0.
\end{equation*}

\item A pair $(\mathcal A^\ast, d^\ast),\ \mathcal A^\ast$ a CGA and   $d^\ast$  exterior differential, is called CDGA  and a triple $(\mathcal A^\ast, d^\ast, \beta_\ast),\   \mathcal A^\ast$ a CGA,  $d^\ast$  exterior differential and $\beta_\ast$ interior differential,
with $d^\ast$ and $\beta_\ast$ compatible,  is called a mixed  CDGA.
\end{enumerate}
\end{definition}

A mixed  CDGA is a mixed cochain complex .

A  degree preserving linear map $f^\ast :\mathcal A^\ast \to \mathcal B^\ast$ is a morphism of CGA's, resp. CDGA's, resp. mixed CDGA's  if is a unit preserving graded algebra homomorphism and   intertwines  $d'$s and $\beta'$s
when the case.

We will consider the categories of CGA's, CDGA's and mixed CDGA's.
In all these three categories there is a canonical tensor product and in the category of CDGA's a well defined concept of homotopy between two morphisms (cf. \cite {Lo}, \cite {Ha}) 
\footnote{ let $k(t,dt)$ 
be the free commutative graded algebra generated by the symbol $t$ of degree zero and $dt$ of degree one, equipped with the differential $d(t)= dt.$   A morphism $F: (\mathcal A, d_\mathcal A) \to (\mathcal B, d_\mathcal B) \otimes_k (k(t,dt), d),$  is called elementary homotopy from $f$ to $g$ , $f,g: (\mathcal A, d _\mathcal A)\to 
(\mathcal  B, d_{\mathcal  B}),$
if 
$\rho_0 \cdot F= f,$  and $\rho_1\cdot F=g$
 where 
 
 \begin{equation*}
\begin{aligned}
\rho_0( a\otimes p(t))= &p(0)a,  \   \rho_0(a\otimes p(t)dt)= 0,\\
\rho_1(a\otimes p(t))= &p(1)a, \   \rho_1(a\otimes p(t)dt)= 0,
\end {aligned} 
\end{equation*}
The homotopy is the equivalence relation generated by elementary homotopy.}.
The category of mixed CDGA's is a subcategory of mixed cochain complexes and all definitions and considerations in section \ref {SMC} can be applied. 

For a (commutative) differential graded algebra  
$ (\mathcal  A^\ast, d^\ast_{\mathcal  A}),$ the graded vector space  
$H^\ast(\mathcal  A^\ast, d^\ast)= Ker (d^\ast)/ {Im (d^{\ast-1})}$
is a commutative graded algebra whose 
multiplication is induced by the multiplication in $\mathcal  A^\ast.$
A morphism $f= f^\ast : (\mathcal  A^\ast, d^\ast_{\mathcal  A})\to (\mathcal  B^\ast, d^\ast_{\mathcal  B})$
induces a degree preserving  linear map, $H^{\ast}(f): H^\ast(\mathcal  A^\ast,d_{\mathcal  A}^\ast)\to H^\ast(\mathcal  B^\ast,d_{\mathcal  B}^\ast),$ which is an algebra homomorphism.

\begin{definition}\label {D2}
 A morphism of CDGA's $f,$  with $H^k(f)$  isomorphism for any $k,$ 
 is called a quasi isomorphism. 

The CDGA $(\mathcal A, d_{\mathcal A})$ is called homologically connected  if  $H^0(\mathcal A, d_{\mathcal A})$
$=\kappa$
and homologically 1-connected if homologically  connected and $H^1(\mathcal A, d_{\mathcal A})=0.$
\end{definition}

The full subcategory of homologically connected CDGA's will be denoted by c--CDGA. For all practical purposes (related to geometry and  topology) it suffices to consider only  c-CDGA' s.

\begin{definition} \label{D3}

1. The CDGA  $(\mathcal A, d)$ is called free if $\mathcal A= \Lambda [V],$ where $V= \sum_{i\geq 0}V^i$ is a graded vector space  and $\Lambda[V]$ denotes the free commutative  graded algebra generated by $V.$  If in addition $V^0=0$  is called free connected commutative differential graded algebra, abbreviated fc-CDGA.

2. The CDGA $(\mathcal A, d)$ is called  minimal  if  it is a fc-CDGA  and 
 in addition  
 
 i. $d(V)\subset \Lambda^+[V]\cdot \Lambda^+[V],$ with $\Lambda^+[V]$ the ideal generated by $V$,
 
 ii. $V^1= \oplus_{\alpha\in I} V_\alpha$ with $I$ a well ordered set and $d(V_\beta)\subset 
 \Lambda[\oplus _{\alpha <\beta} V_\alpha]$ (the set $I$ and its order are not part of the data)
  \end{definition}

\begin{observation}\label {O0}
If $(\Lambda[V], d_V)$ is minimal and 1-connected,  then $V^1=0$ and, for $v\in V^i,$ $d_V (v) $is a linear combination of products of elements $v_j\in V^j$ with $j<i$.  In particular for  $v\in V^2$ one has   $dv=0.$  
\end{observation}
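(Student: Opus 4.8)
The plan is to establish the three assertions in order, treating $V^1 = 0$ as the substantive step and deriving the remaining two claims from degree counts once all generators are known to sit in degree $\geq 2$.

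First I would prove $V^1 = 0$ by transfinite induction on the well-ordered index set $I$ from minimality condition (ii), using the hypothesis $H^1(\Lambda[V], d_V) = 0$. Fix $\beta \in I$ and assume $V_\alpha = 0$ for all $\alpha < \beta$; then $\oplus_{\alpha<\beta}V_\alpha = 0$, so $\Lambda[\oplus_{\alpha<\beta}V_\alpha] = \kappa$, which is concentrated in degree $0$. Since every $v \in V_\beta$ has $d_V(v)$ of degree $2$ and, by (ii), $d_V(v) \in \Lambda[\oplus_{\alpha<\beta}V_\alpha] = \kappa$, it follows that $d_V(v) = 0$, so $v$ is a cocycle of degree $1$. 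The decisive point is that a degree-$1$ cocycle cannot be a nonzero coboundary: the degree-$0$ part of $\Lambda[V]$ is $\kappa$ (as $V^0 = 0$) and $d_V$ vanishes there, so the only exact element in degree $1$ is $0$. As $H^1 = 0$ forces $v$ to be exact, we conclude $v = 0$ and hence $V_\beta = 0$. The induction gives $V^1 = \oplus_\beta V_\beta = 0$.

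With $V^1 = 0$ and $V^0 = 0$, every generator lies in degree $\geq 2$. For the second claim I would fix $v \in V^i$ and expand $d_V(v)$ in the monomial basis of $\Lambda[V]$. Minimality condition (i) says $d_V(v) \in \Lambda^+[V]\cdot\Lambda^+[V]$, which in this basis means each monomial occurring is a product of at least two generators, and each such monomial has total degree $i+1$. A degree count then finishes it: if some factor were a generator of degree $j \geq i$, the remaining factors (at least one, each of degree $\geq 2$) would push the total degree to $\geq i+2$, contradicting degree $i+1$; hence every generator appearing has degree $< i$, which is exactly the assertion.

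Finally, the third claim drops out in two ways. By the second claim, for $v \in V^2$ the element $d_V(v)$ is a combination of products of generators of degree $< 2$; but $V^0 = V^1 = 0$ leaves no such generators, so $d_V(v) = 0$. Directly, $d_V(v)$ has degree $3$ and is decomposable, while any product of at least two generators of degree $\geq 2$ has degree $\geq 4$, so $d_V(v) = 0$. The main obstacle is the first paragraph: the second and third claims are pure bookkeeping, whereas $V^1 = 0$ is where $1$-connectivity and the well-ordered filtration of $V^1$ genuinely enter.
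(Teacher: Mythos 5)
Your proof is correct. The paper states Observation \ref{O0} without proof, treating it as a standard fact of Sullivan's minimal model theory, and your argument is precisely the standard one that fills this in: transfinite induction over the well-ordered decomposition of $V^1$ from minimality condition (ii), combined with the fact that $H^1=0$ and the absence of degree-$0$ coboundaries force each $V_\beta=0$, followed by the degree count on decomposable elements (condition (i)) for the second and third claims.
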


The interest of minimal  algebras comes from the following  result  \cite{L}, \cite{Ha}.
\begin{theorem} \label{T1}
1 (D. Sullivan)

1. A quasi isomorphism between two minimal CDGA's is an isomorphism.

2. For any  homologically connected CDGA,  \ $(\mathcal A, d_{\mathcal A}),$  there exists quasi isomorphisms $\theta: (\Lambda[ V], d_V)\to (\mathcal A, d_{\mathcal A}) $ with $(\Lambda[ V], d_V)$ minimal. Such $\theta$ will be called  minimal model of $(\mathcal A, d_\mathcal A).$

3. Given a morphism $f:(\mathcal A, d_{\mathcal A})\to (\mathcal B, d_{\mathcal B})$  and the minimal models $\theta_A: (\Lambda[V_A], d_{V_A})\to (A,d_A)$  and  $\theta_B: (\Lambda[V_B], d_{V_B})\to (B,d_B),$   there exists morphisms  $f' :  (\Lambda [V_A], d_{V_A}) \to (\Lambda[V_B], d_{V_B})$ such that $f \cdot \theta_A$ and $\theta_B\cdot f'$ are homotopic; moreover   any two such $(f')$'s  are homotopic.
\end{theorem}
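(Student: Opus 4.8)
The three assertions are cleanest to prove not in the stated order but as follows: a general Lifting Lemma (which is exactly assertion (3) in disguise), then assertion (1) as a formal consequence, with the existence statement (2) proven independently by an inductive construction. I would run the argument with the $1$-connected case (Observation \ref{O0}, so $V=V^{\geq 2}$) as the model and indicate how the well-ordered filtration of $V^{1}$ from Definition \ref{D3}(2) handles the general homologically connected case. Two elementary facts underlie everything. First, a free algebra $\Lambda[V]$ carries the word-length filtration by the powers of $\Lambda^{+}[V]$, and minimality ($d_V V\subset \Lambda^{+}[V]\cdot\Lambda^{+}[V]$) forces $d_V$ to strictly increase word length; hence the associated graded of any morphism $\phi\colon(\Lambda[V],d_V)\to(\Lambda[W],d_W)$ is the free extension of its linear part $Q\phi\colon V\to W$ on indecomposables $Q=\Lambda^{+}/(\Lambda^{+})^{2}$, so \emph{$\phi$ is an isomorphism if and only if $Q\phi$ is} (the filtration being exhaustive and finite in each degree). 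Second, inspecting the definition of elementary homotopy through $\kappa(t,dt)$, a homotopy between morphisms of minimal algebras induces a chain homotopy of their linear parts, viewed as maps of complexes whose differentials vanish by minimality; hence \emph{homotopic morphisms of minimal algebras induce the same map $Q\phi$}. I would record both as preliminary observations.

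The crux is the Lifting Lemma: given $\eta\colon(\Lambda[V],d_V)\to(C,d_C)$ out of a minimal algebra and a quasi-isomorphism $\psi\colon(B,d_B)\to(C,d_C)$, there exists $\eta'\colon\Lambda[V]\to B$ with $\psi\eta'\simeq\eta$, unique up to homotopy. I would prove this by induction over a filtration of $V$ by degree (refined by the well-ordering of $V^{1}$). At the stage of a new generator $v$ with $d_V v=\Phi\in\Lambda[V^{<n}]$ already handled, $\eta'(\Phi)$ is a cocycle in $B$ whose class satisfies $\psi_{\ast}[\eta'(\Phi)]=[\psi\eta'(\Phi)]=[\eta(\Phi)]=[d_C\eta(v)]=0$; since $\psi_{\ast}$ is injective, $[\eta'(\Phi)]=0$, so $\eta'(\Phi)=d_B b$ and setting $\eta'(v)=b$ extends the chain map. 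One then uses surjectivity of $\psi_{\ast}$ to correct $b$ by a cocycle and simultaneously extend the connecting homotopy, preserving $\psi\eta'\simeq\eta$. Uniqueness up to homotopy is the same induction applied to the difference of two lifts. Assertion (3) is then the special case $C=\mathcal B$, $\psi=\theta_B$, $\eta=f\circ\theta_A$: the lift is the sought $f'\colon(\Lambda[V_A],d_{V_A})\to(\Lambda[V_B],d_{V_B})$ with $\theta_B f'\simeq f\theta_A$, and its homotopy class is unique. The main obstacle of the whole theorem lives precisely here: one must extend the map and the homotopy \emph{together} at each stage, and this is exactly where both the injectivity and the surjectivity of $\psi_{\ast}$ are consumed; making this step airtight and well founded (with infinitely many generators, or with the degree-one generators well-ordered) is the only genuinely delicate point.

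Assertion (1) then follows formally. Let $\phi\colon M\to M'$ be a quasi-isomorphism of minimal algebras. Applying the Lifting Lemma with $\psi=\phi$ and $\eta=\mathrm{id}_{M'}$ yields $g\colon M'\to M$ with $\phi g\simeq\mathrm{id}_{M'}$; since $\phi(g\phi)\simeq\phi=\phi\,\mathrm{id}_{M}$, the uniqueness clause forces $g\phi\simeq\mathrm{id}_{M}$, so $\phi$ is a homotopy equivalence. By the two preliminary observations, $Q\phi$ and $Qg$ are then mutually inverse isomorphisms on indecomposables, whence $Q\phi$ is an isomorphism and therefore $\phi$ is an isomorphism.

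Finally, for existence (2) I would build $\theta\colon(\Lambda[V],d_V)\to(\mathcal A,d_{\mathcal A})$ by a Postnikov-type induction on degree. In the $1$-connected case, start with $V^{2}$ mapping isomorphically onto $H^{2}(\mathcal A)$ (here $d_V V^{2}=0$ by Observation \ref{O0}); inductively, assuming $\theta$ defined on $\Lambda[V^{<n}]$ so that $H^{\ast}(\theta)$ is an isomorphism in degrees $<n$ and injective in degree $n$, adjoin cocycle generators in $V^{n}$ mapped so as to surject onto $H^{n}(\mathcal A)$, and then adjoin further generators of $V^{n}$ with prescribed differentials killing the kernel of $H^{n+1}$. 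Minimality is automatic: a degree-$(n{+}1)$ element of $\Lambda[V^{\leq n}]$ has no indecomposable part, hence is decomposable, so the prescribed differentials land in $\Lambda^{+}\cdot\Lambda^{+}$. The general homologically connected case replaces the start below degree $2$ by the well-ordered exhaustion of $V^{1}$ in Definition \ref{D3}(2), the nilpotence condition ensuring the induction is well founded; the resulting $\theta$ is the desired minimal model.
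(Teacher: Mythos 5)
You should know at the outset that the paper contains no proof of this theorem to compare yours against: Theorem \ref{T1} is quoted as Sullivan's result, with the proof delegated entirely to the cited references \cite{L} and \cite{Ha}, and the paper only uses its consequences (that fc--CDGA's form a skeleton of the homotopy category of c--CDGA's, so homotopy functors extend). What you have written is therefore a proof the paper deliberately omits, and your outline is the standard one --- essentially the argument of the cited sources: a lifting lemma for morphisms out of a minimal (Sullivan) algebra against a quasi-isomorphism, with existence and uniqueness of lifts up to homotopy; assertion (3) as the special case $\psi=\theta_B$, $\eta=f\cdot\theta_A$; assertion (1) by producing a homotopy inverse and invoking your two preliminary observations (a morphism of free connected algebras is an isomorphism iff its linear part $Q\phi$ is, and homotopic morphisms of minimal algebras have equal linear parts because minimality kills the linear part of the differential); and assertion (2) by the degreewise Postnikov-type construction, where your remark that elements of $\Lambda[V^{\leq n}]$ in degree $n+1$ are automatically decomposable is the correct reason minimality comes for free in the $1$-connected case. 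Two points remain genuinely sketched rather than proved, and you correctly flag the first yourself: the simultaneous extension of the lift $\eta'$ and of the homotopy $\psi\eta'\simeq\eta$ (a map into $B\otimes\kappa(t,dt)$ in the sense of the paper's footnote) across a new generator, which is where injectivity and surjectivity of $\psi_\ast$ are both consumed; and the merely homologically connected case of (2), where the degree-one generators must be adjoined along a well-ordered, possibly transfinite, exhaustion so that condition (ii) of Definition \ref{D3} holds --- this case is substantially harder than the $1$-connected one and is dispatched in a sentence. Neither is a wrong step, but both would have to be written out in full (as in \cite{Ha}) for this to be a complete proof; the architecture is sound and consistent with the homotopy notion the paper actually uses.
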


We can therefore consider the homotopy  category of c--CDGA's, whose the morphisms are homotopy classes of morphisms  of CDGA's.   By the above theorem the full subcategory of fc-CDGA is a skeleton, and therefore any homotopy functor a priori defined on  fc--CDGA's   admits extensions to homotopy functors defined on the full homotopy category of c--CDGA's and all these extensions are  isomorphic as functors. In particular any statement about a homotopy functor on the category c-CDGA  suffices to be verified for fc--CDGA. 

Precisely for any c--GDGA, \  $(\mathcal A, d_\mathcal A),$ choose a minimal model $\theta_\mathcal A:(\Lambda[V_\mathcal A], d_{V_\mathcal A}) \to (\mathcal A, d_\mathcal A)$ and  for any $f:(\mathcal A, d_\mathcal A)\to (\mathcal B, d_\mathcal B)$ choose a  morphism 
$f':(\Lambda[V_\mathcal A], d_{ V_\mathcal A})\to (\Lambda[V_\mathcal B], d_{V_\mathcal B})$ so that $\theta_B\cdot f'$ and $f\cdot \theta_A$ are homotopic.
Define the value of the functor on $(\mathcal A, d_\mathcal A)$ to be the value on $(\Lambda[V_\mathcal A], d_{ V_\mathcal A})$ and the value on a morphism 
$f:(\mathcal A, d_\mathcal A)\to (\mathcal B, d_\mathcal B)$ to be the value on the morphism 
$f':(\Lambda[V_\mathcal A], d_{ V_\mathcal A})\to (\Lambda[V_\mathcal B], d_{V_\mathcal B}).$ 

There are two natural examples of mixed CDGA's; one is provided  by a smooth manifold equipped with a smooth vector field,  the other by a construction referred to as "the free loop",  considered first by Sullivan-Vigu\'e. The free loop construction applies directly only  to a fc-CDGA but in view of Theorem \ref{T1} can be indirectly used for any c--CDGA.
 
The first  will lead to  (the de--Rham version of)  a new homotopy functor defined on the category of possibly  infinite dimensional manifolds (hence on the homotopy category of all countable CW complexes) , the {\bf s-cohomology}, and  its relationship with other familiar 
homotopy functors \footnote {this functor was called in \cite{B2} and \cite{B4}  string cohomology for its unifying role  explained  below, cf. Observation \ref {O2}. The name "string homology" was afterwards used  by Sullivan and his school to designate the homology and equivariant homology of the free loop space of a closed manifold when endowed with  additional structures induced by intersection theory  and the Pontrjagin product in the chains of based pointed loops  cf. \cite {CS}.}, cf section \ref{SDeR} below.
 The second  leads  to simple definitions of three  homotopy functors  defined on the full category of c--CDGA's (via the minimal model theorem)  with values in the graded vector spaces endowed  with weight decomposition, cf  section \ref{Ssc} below.  Their properties  lead  to interesting results about cohomology of the free loop space of 1-connected spaces.
\vskip .2in

\section {De-Rham Theory  in the presence of a smooth vector field}\label{SDeR}

Let $M$ be a smooth manifold, possibly of infinite dimension.  In the last  case the manifold  is modeled on a good Frechet  space \footnote {this is a Frechet space with countable base which admits a smooth partition of unity; Note that if a Frechet space $V$ is good then the space of smooth maps $C^{\infty} (S^1, V)$ equipped with the $C^\infty-$ topology is also good.}, for which  the differential calculus can be performed as expected.  

Consider the CDGA  of differential forms $\Omega^\ast (M) $ with exterior differential $d^\ast: \Omega^\ast  (M) \to \Omega^{\ast +1}(M)$ and interior differential
$ i^X_\ast:\Omega^\ast  (M) \to \Omega^{\ast -1}(M),$ the contraction along the vector field $X.$
They are not compatible. However we can consider the Lie derivative $L_X:= d\cdot i^X + i^X\cdot d $ and define $\Omega_X(M):= \{\omega\in \Omega (M) | L_X\omega=0\};$ 
$\Omega_X(M)$ consists of the smooth forms invariant by the flow induced by $X.$ 
The graded vector space $\Omega^\ast_X (M)$ is a commutative graded algebra, a sub algebra of $\Omega ^\ast (M),$ and the restriction of $d^\ast $ and of $i^X_\ast$  leave invariant $\Omega^\ast_X(M)$ and are compatible. Consequently  $(\Omega^\ast_X(M), d^\ast,i^X_\ast)$ is a mixed CDGA.

Denote  by 
\begin{enumerate}
\item $H^\ast_X(M):= H^\ast (\Omega^\ast_X, d^\ast),$
\item $^\pm H^\ast_X(M):= ^\pm H^\ast (\Omega^\ast_X, d^\ast, i^X_\ast),$
\item $PH^\ast_X(M):= PH^\ast (\Omega^\ast_X, d^\ast, i^X_\ast),$
\item $\mathbb PH^\ast_X(M):= \mathbb PH^\ast (\Omega^\ast_X, d^\ast, i^X_\ast).$
\end{enumerate}

The diagram Fig 2 becomes 

\diagram
 \rto &^+H^r_X(M)\rto^{S^r}\dto^{id} & ^+H^{r+2}_X(M) \rto^{J^{r+2}} \dto^{\mathbb I^{r+2}} & H^{r+2}(M)\rto^{B^{r+2}}\dto                 & ^+H_X^{r+1} (M)  \rto \dto  &  \cdots\\
 \rto &^+H^r_X(M)\rto ^{\mathbb I^r}     & \mathbb PH^{r+2}(M)\rto^{\mathbb J^{r+2}}                          & ^-H^{r+2}_X(M)\rto^{\mathbb B^{r+2}}                      & ^+H^{r+1}_X(M)  \rto  &\cdots 
\enddiagram
\hskip 2in  Fig 3
\vskip .1in

The above diagram becomes  more interesting if the vector field $X$ is  induced from an $S^1$ action 
$\mu:S^1\times M\to M$ (i.e. if $x \in M$ then $X(x)$ is the  tangent to the orbits through $x$). 
In this section we will explore particular cases of this diagram. 

Observe that since $\mu$ is a smooth action,  the subset  $F$ of fixed points is a smooth sub manifold.  For any $x\in F$
denote by $\rho_x: S^1\times T_x(M)\to T_x(M)$  the linearization of  the action at $x$  which is a  linear representation. The inclusion $ F\subset M$ induces the morphism $r^\ast: (\Omega^\ast_X(M), d^\ast, i^X_\ast)\to 
(\Omega^\ast(F), d^\ast, 0).$ 

For  a linear representation $\rho: S^1\times V\to V$ on a good Frechet  space denote by $V^f$ the fixed point set and by $X$ the vector field associated to $\rho$ when regarded as a smooth action. 

\begin{definition}\label{D4}
A linear representation $\rho :S^1\times V\to V$ on the good Frechet space is good if 
the following conditions hold:

a. $V^f,$ the fixed point set, is a good Frechet space, 

b. The map $ r^\ast: \Omega^\ast (V) \to \Omega^\ast (V^f)$  induced by the inclusion is surjective, 

c.  $(\Omega^\ast_X (V, V^f), i^X_\ast)$ with $(\Omega^\ast_X (V, V^f) = \ker  r^\ast$  is acyclic.
\end{definition}


We have:
 
 \begin{proposition}\label{P2}
 
 1. Any representation on a finite dimensional  vector space is good.
 
 2. If $V$ is  a good Frechet  space then the regular representation $\rho: S^1 \times C^\infty (S^1, V)\to 
C^\infty (S^1, V)$, with $C^\infty (S^1, V),$ the Frechet space of smooth functions, is good.
\end{proposition}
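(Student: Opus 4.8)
The plan is to verify the three conditions (a), (b), (c) of Definition \ref{D4} in each case, the only nontrivial point being the acyclicity (c), which I will obtain from an explicit contracting homotopy for $i^X$.

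\textbf{Part 1.} First decompose $V=V^f\oplus W$ using an $S^1$-invariant inner product $g$ (obtained by averaging any inner product over the compact group $S^1$); here $V^f$ is the fixed subspace and $W$ is an invariant complement on which $X$ has no zeros off the origin. Conditions (a) and (b) are then immediate: $V^f$ is a finite dimensional vector space, hence good, and the linear projection $V\to V^f$ splits the inclusion, so that pulling back along it shows $r^\ast$ is surjective. For (c), on the open set $V\setminus V^f$ where the invariant vector field $X$ is nowhere zero, the invariant $1$-form $\alpha:=g(X,-)/g(X,X)$ satisfies $i^X\alpha=\alpha(X)=1$ and $L_X\alpha=0$. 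Setting $h(\omega):=\alpha\wedge\omega$ and using the graded Leibniz rule for contraction together with $i^X\alpha=1$, one gets
\begin{equation*}
i^X h(\omega)+h(i^X\omega)=(i^X\alpha)\,\omega=\omega
\end{equation*}
on $V\setminus V^f$, so $h$ is a contracting homotopy for the differential $i^X$ there.

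\textbf{The main obstacle} is that $\alpha$, and hence $h$, is singular along $V^f$, where $X$ vanishes. This is exactly where the relative condition $\omega\in\Omega^\ast_X(V,V^f)=\ker r^\ast$ enters: an invariant form that restricts to zero on the fixed locus vanishes to the order needed for $\alpha\wedge\omega$ to extend smoothly across $V^f$. To make this precise, and to see that no smooth cohomology survives, I would compute the complex summand by summand, writing $W=\bigoplus_j W_j$ with each $W_j\cong\mathbb R^2$ a weight $n_j$ rotation representation. On a single $\mathbb R^2$ with polar coordinates $(r,\theta)$ the invariant forms are $\phi(r^2)$, $p(r^2)(x\,dx+y\,dy)+q(r^2)(x\,dy-y\,dx)$ and $\tilde g(r^2)\,dx\wedge dy$, and a direct inspection of $i^X$ (with $X=\partial_\theta$) shows that its only homology sits in degree $0$ and equals the value at the origin, which is exactly what $r^\ast$ records. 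A K\"unneth argument across the factors $V^f$ and the $W_j$ then identifies the $i^X$-homology of $\Omega^\ast_X(V)$ with $\Omega^\ast(V^f)$ via $r^\ast$; equivalently the kernel $\Omega^\ast_X(V,V^f)$ is acyclic. This is the finite dimensional localization-to-fixed-points statement.

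\textbf{Part 2.} For the regular representation the fixed set is the space of constant functions, canonically $V$, which is good by hypothesis, giving (a); constant extension $V\to C^\infty(S^1,V)$ splits the restriction and gives (b). For (c) the generating vector field is $X=\partial_t$. I would Fourier-decompose $f(t)=\sum_{n}c_n e^{2\pi i n t}$: the mode $n=0$ is the fixed (constant) part, while each pair $\pm n$, $n\ge 1$, is a weight $n$ rotation representation (with coefficients in $V$) having no nonzero fixed vector. Thus $C^\infty(S^1,V)$ is a completed product of the fixed part $V$ with rotation representations, and the model computation of Part 1 applies to each nonzero weight to show that it contributes trivially to $i^X$-homology, so that again $r^\ast$ is a quasi-isomorphism and $\Omega^\ast_X(C^\infty(S^1,V),V)$ is acyclic. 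The delicate part here is functional-analytic rather than formal: one must justify the Fourier decomposition of invariant \emph{forms} on the infinite dimensional good Frechet space, and check that passing to $i^X$-homology commutes with the completed product, using precisely the goodness hypotheses (smooth partitions of unity, and the stability of goodness under $C^\infty(S^1,-)$ noted in the footnote) to keep all constructions inside the smooth Frechet category. I expect this convergence-and-smoothness bookkeeping, rather than the formal homotopy, to be the real work.
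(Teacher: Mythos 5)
Your proposal is correct in outline and takes essentially the same route as the paper: the paper does not prove Proposition~\ref{P2} in the text but defers it to the Appendix of \cite{B1}, describing that proof as based on an explicit formula for $i^X$ on irreducible $S^1$-representations together with the Fourier series expansion of elements of $C^\infty(S^1,V)$ --- which is precisely your weight-by-weight model computation on the rotation planes and your Fourier decomposition in Part~2. The convergence-and-smoothness bookkeeping you identify as the real work is exactly what the paper delegates to that appendix.
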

For a proof consult Appendix  \cite{B1}. The proof is based on an explicit formula for  $i^X$ in the case of irreducible $S^1-$ representation and on the writing of the elements of $C^\infty (S^1, V)$ as Fourier series.

\begin{definition}\label{D5}
A  smooth action $\mu: S^1\times M\to M$  is good if its linearization at any fixed point is a good representation. 
\end{definition} 
 
Then a smooth action on any finite dimensional manifold is good and so is the canonical smooth action of $S^1$ on $P^{S^1},$ the smooth manifold of smooth maps from $S^1$ to $P$ where $P$ is any smooth Frechet  manifold  (in particular a finite dimensional manifold).
In view of the definitions above 
observe the following.
\begin {proposition}\label {P3}
If $\tilde M= (M,\mu)$ is a smooth $S^1-$manifold and $X$ is the associated vector field,  then:

1.  $H^\ast_X(M)= H^\ast (M),$

2. $^+ H^\ast_X(M)= H^\ast_{S^1}(\tilde M)$ and $ S:H^\ast(M)\to H^{\ast +2}(M)$ identifies to the multiplication with $u\in H^2_{S^1}(pt),$
the generator of the equivariant cohomology of one point space,

3. $PH^\ast_X(M)= \underset {\underset {S} {\rightarrow}} {\lim} H^{\ast+2k}_{S^1}(\tilde M).$
 
If the action is good then: 

4. $\mathbb PH^\ast_X(M)= K^\ast (F)\ $  
where 
\begin{equation*}
\begin{aligned}
K^r(F)= &\prod_k H^{2k}(F)\ \text {if}\ \ r \ \ even\\
K^r(F)= &\prod_k H^{2k+1}(F)\ \text {if}\ \  r \ \ odd.
\end{aligned}
\end{equation*}

If $M$ is a closed of $n-$dimension  manifold then :
 
5. $^- H^\ast_X(M)= H^{S^1}_{n-1-\ast} (\tilde M, \mathcal O_M)$
with $H^{S^1}_\ast (\tilde M, \mathcal O_M)$  the equivariant homology with coefficients in the orientation bundle  \footnote{ Recall that $H^{S^1}_{\ast}(M,\mathcal O_M)= H_\ast ( M// S^1, \mathbb O_M)$ where $M//S^1$ is the homotopy quotient of this action.  This equivariant homology can be derived from invariant currents in the same way as equivariant cohomology from invariant forms,  cf. \cite{AB}.
The complex of invariant currents (with coefficients in orientation bundle) contains the complex $(\Omega^{n-\ast} _X(M, \mathcal O_M), \partial _{n-\ast})$ as a quasi isomorphic sub complex.} of $M.$
\end{proposition}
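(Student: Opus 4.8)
The plan is to treat the five statements in order, leaning on the structural results of Section~\ref{SMC}. For~(1) I would average over the circle: since $S^1$ is compact and connected, each $\mu_g$ is smoothly homotopic to the identity, so the averaging operator $A\omega:=\int_{S^1}\mu_g^\ast\omega\,dg$ is a well-defined chain map $\Omega^\ast(M)\to\Omega^\ast_X(M)$ (the integral and the Cartan calculus being available because $M$ is modelled on a good Frechet space) which is a homotopy inverse to the inclusion $\Omega^\ast_X(M)\hookrightarrow\Omega^\ast(M)$; hence the inclusion is a quasi-isomorphism and $H^\ast_X(M)=H^\ast(M)$. For~(2) the point is that the cochain complex $^+\mathcal C_\beta$ attached to $(\Omega^\ast_X(M),d,i^X)$ is literally the Cartan model: writing an element of $^+C^r=\prod_{k\geq 0}\Omega^{r-2k}_X(M)$ as $\sum_{k\geq 0}\omega_{r-2k}u^k$ with $\deg u=2$, the differential $^+D_\beta$ becomes the equivariant (Cartan) differential built from $d$ and $i^X$, so $^+H^\ast_X(M)=H^\ast_{S^1}(\tilde M)$, and the shift $^+\mathcal C^\ast\to{}^+\mathcal C^{\ast+2}$ defining $S$ is multiplication by $u$, the generator of $H^2_{S^1}(\mathrm{pt})$. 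Statement~(3) is then immediate: Proposition~\ref{P1}(1) gives $PH^r_X(M)=\varinjlim_S{}^+H^{r+2k}_X(M)$, and substituting~(2) yields the stated direct limit of equivariant cohomology groups.

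For~(4) I would localize to the fixed-point set $F$. Restriction is a morphism of mixed CDGA's $r^\ast:(\Omega^\ast_X(M),d,i^X)\to(\Omega^\ast(F),d,0)$, the interior differential dying on $F$ because $X$ vanishes there. When $\beta=i^X=0$ the $2$-periodic complex $\mathbb P\mathcal C$ of $(\Omega^\ast(F),d,0)$ has differential $d$ on each factor, so $\mathbb PH^{\mathrm{even}}(F)=\prod_k H^{2k}(F)$ and $\mathbb PH^{\mathrm{odd}}(F)=\prod_k H^{2k+1}(F)$, which is exactly $K^\ast(F)$. It therefore suffices to show $r^\ast$ induces an isomorphism on $\mathbb PH$, i.e. that the relative mixed complex $\ker r^\ast=(\Omega^\ast_X(M,F),d,i^X)$ has vanishing $\mathbb PH$. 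Goodness of the action (Definition~\ref{D4}, via Proposition~\ref{P2}) provides $i^X$-acyclicity of this relative complex; and a $\beta$-acyclic mixed complex has $\mathbb PH=0$, as one sees by regarding $\mathbb P\mathcal C$ as a chain complex with differential $\beta$ perturbed by $\delta$, filtering so that the associated graded differential is $\beta$, and invoking acyclicity on the $E_1$-page (convergence being unproblematic in the present $\mathbb Z_{\geq0}$-graded setting). The long exact sequence of the pair then gives $\mathbb PH^\ast_X(M)\cong\mathbb PH^\ast(F)=K^\ast(F)$.

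For~(5) I would use Poincaré duality in the form of the pairing between invariant forms and invariant currents on the closed $n$-manifold $M$. As recalled in the footnote, the complex of invariant currents with coefficients in $\mathcal O_M$ computes $H^{S^1}_\ast(\tilde M,\mathcal O_M)$ and contains $(\Omega^{n-\ast}_X(M,\mathcal O_M),\partial)$ as a quasi-isomorphic subcomplex, with $d$ dual to $\partial$ and $i^X$ dual to the current-level contraction. Passing the $^-$ construction through this duality converts the cochain complex $^-\mathcal C_\beta$ of $(\Omega^\ast_X,d,i^X)$ into the complex computing $S^1$-equivariant homology, the reindexing $\ast\mapsto n-1-\ast$ arising from the Poincaré shift by $n$ together with the degree $-1$ intrinsic to the negative ($^-$) construction. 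This yields $^-H^\ast_X(M)=H^{S^1}_{n-1-\ast}(\tilde M,\mathcal O_M)$.

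The main obstacle is the globalization needed in~(4): Proposition~\ref{P2} supplies $i^X$-acyclicity of $\ker r^\ast$ only locally, at each fixed point through the linearized representation, and one must upgrade this pointwise statement to global acyclicity of $(\Omega^\ast_X(M,F),i^X)$ over all of $M$. I expect this to require a local-to-global argument (a partition-of-unity or Mayer--Vietoris patching), which is available precisely because the models are good Frechet spaces admitting smooth partitions of unity. The careful degree bookkeeping in the duality of~(5), in particular justifying the shift $n-1-\ast$ and the passage to invariant currents, is the secondary difficulty.
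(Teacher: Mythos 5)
Your proposal is correct and follows essentially the same route as the paper: averaging over $S^1$ for (1), recognizing $^+\mathcal C_\beta$ as the Cartan model for (2), Proposition~\ref{P1}(1) for (3), restriction to the fixed-point set with goodness of the action plus a partition-of-unity localization of the $i^X$-acyclicity of $\ker r^\ast$ for (4), and the invariant-currents/Hodge duality with the $n-1-\ast$ shift for (5). The only packaging difference is in (4), where the paper deduces the $\mathbb PH$-isomorphism from the $i^X$-homology isomorphism by citing Proposition~\ref{P1}(4), while you re-derive the vanishing of $\mathbb PH(\ker r^\ast)$ by a filtration argument --- which does succeed here, but only because the $\mathbb Z_{\geq 0}$-grading permits the staircase (acyclic-assembly) induction starting from the bottom degree, precisely the point your ``convergence is unproblematic'' parenthesis glosses over.
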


\proof 
1. The verification is  standard since $S^1 $ is compact and connected; one construct  $av^\ast: (\Omega^\ast(M), d^\ast) \to (\Omega_X^\ast, d^\ast)$ by $S^1-$ averaging using the compacity of $S^1.$ The homomorphism induced in cohomology by $av^\ast$ is obviously surjective. To check  it is injective one has to show that any closed $k$ differential form $\omega$ which becomes  exact after applying $av$ is  already exact, precisely $\int_c\omega=0$ for any smooth $k-$ cycle $c.$ Indeed, since  the connectivity of $S^1$ implies $\int_c\omega= \int_{\mu(-\theta, c)}\omega , \ \theta \in S^1$
one has: 

\begin{equation*}
\begin{aligned}
\int_c\omega= 1/2\pi \int_{S^1}(\int_c\omega)d\theta=  \\
1/2\pi \int_{S^1}(\int_{\mu(-\theta, c)} \omega)d\theta
=1/2\pi  \int_{S^1}(\int_c \mu_\theta^\ast (\omega))d\theta= \\
 \int_c (1/2\pi \int_{S^1} \mu_\theta^\ast (\omega)d\theta)= 
\int_c av^\ast(\omega)=0.
\end{aligned}
\end{equation*} 
Here $\mu_\theta$ denotes the diffeomorphism $\mu(\theta, \cdot ): M\to M.$ 

2. Looking at the definition in section \ref{SMC} one  recognizes one of the most familiar definition of equivariant cohomology using invariant differential forms cf. \cite{AB}. 

3. The proof is a straightforward consequence of Proposition \ref{P1} in section \ref{SMC} and (2) above.

4. Let $F$ be the smooth sub manifold of the fixed points of $\mu.$  Clearly $r^\ast : (\Omega^\ast_X(M), d^\ast, i^X_\ast) \to (\Omega^\ast (F), d^\ast_F, 0)$ is a morphism of mixed CDGA, hence of mixed complexes.
If the smooth action is good then 
the above morphism induces an isomorphism in homology $H_\ast (\Omega^\ast_X, i^X_\ast)\to H_\ast (\Omega^\ast (F), 0).$  To check this  
we have to show that $(\ker r^\ast, i^X_\ast)$, with $\ker r^\ast:= \{\omega\in \Omega_X ^\ast(M) | \omega|_F=0\},$ is acyclic.  This follows (by $S^1-$average) from the acyclicity of the chain complex $(\Omega^\ast (M,F), i^X_\ast)$ which in turn can be derived using  the linearity  w.r. to functions of of $i^X.$
Indeed, using  a "partition of unity"  argument it suffices to verify this acyclicity  locally.
For points outside $F$  the acyclicity follows from the acyclicity of the complex 

$\cdots \Lambda^{\ast-1} (V) \overset {i^e}\to \Lambda^\ast(V)\overset{i^e}\to \Lambda ^{\ast-1}(V)\to \cdots , $
where $V$ is a Frechet space , $\Lambda^k (F)$ the space of skew symmetric $k-$linear maps from $V$ to $\kappa= \mathbb R, \mathbb C$ and $e\in V\setminus 0.$ For points $x\in F$ this follows from the
fact that  the linearization of the action at  $x$ is a good representation, as stated in Proposition \ref{P2}.

5. If $\tilde M$ is a finite dimensional smooth $S^1-$manifold we can equip $M$  with an invariant Riemannian metric  $g$ and consider  $\star:\Omega^\ast(M)\to \Omega^{n-\ast}(M; \mathcal O_M)$ the Hodge star operator.
Denote by $\omega\in \Omega^1 (M)$ the $1-$form corresponding  to $X$ w.r. to  the metric $g$,   by 
$e_\omega:\Omega^\ast(M;\mathcal O_M)\to \Omega^{\ast +1}(M;\mathcal O_M)$ the exterior multiplication with $\omega$ and by $\partial_\ast: \Omega^\ast(M;\mathcal O_M)\to \Omega^{\ast-1}(M;\mathcal O_M)$ the formal adjoint of $d^{\ast-1}$ w.r. to $g$ i.e. 
$\partial _\ast  =  \pm \star \cdot\ d ^{n-\ast}\cdot \ \star^{-1} .$
Note that $e_\omega= \pm  \star  \cdot \   i^X \cdot \star^{-1} .$ 
All these operators leave $\Omega_{X} $ invariant since $g$ is invariant.
Clearly $(\Omega ^\ast_{X}(M;\mathcal O_M), e^\ast _\omega, \partial _\ast ) $ is a mixed cochain complex
and we have 
$$^-H^\ast _{i^X}(\Omega_X (M), d, i^X)= ^+H^\partial_{n-\ast} (\Omega_X (M;\mathcal O_M),  e^\omega,\partial ).$$
The equivariant   homology
of $\tilde M$  with coefficients in the orientation bundle can be calculated from the complex of invariant currents  which, if $M$ closed, contains the complex $(\Omega^{n-\ast} _X(M, \mathcal O_M), \partial _{n-\ast})$ as a quasi isomorphic sub complex.
As a consequence 
we have 
\begin{equation*}
\begin{aligned}
H_{n-\ast}(\Omega_X(M;\mathcal O_M),\partial)= &H_\ast(M;\mathcal O_M)\\
^+H^{e_\omega}_{n-\ast}(\Omega_X(M;\mathcal O_M), e_\omega,\partial)= &H_\ast^{S^1}(M;\mathcal O_M)
\end{aligned}
\end{equation*}
(cf. section \ref{SMC} for  notations).
\endproof

As a consequence of Proposition \ref{P3}  (1)-(4) for any smooth $S^1$ manifold with good $S^1-$ action the second long exact sequence  in diagram Fig 3  becomes

\diagram 
\cdots \to  H^{r-2}_{S^1}(\tilde M)\drto \rto^{\mathbb I^{r-2}} &K^{r}(F) \rto^{\mathbb J^r}  &{^-H}^{r}_{S^1}(\tilde M)\rto^{\mathbb B^r} &H^{r-1}_{S^1}(\tilde M)\rto &\cdots\\
                                                                   &\underset{\underset{S}{\rightarrow}}\lim H^{r+2k}_{S^1}(\tilde M)\uto 
\enddiagram
\hskip 2in Fig 4
\vskip .1in
The sequence above  is obviously natural in the sense that $f:\tilde M\to \tilde N,$ an $S^1-$ equivariant smooth map, induces a commutative diagram whose rows are the above exact sequence Fig 4 for $ \tilde M$ and $\tilde N.$
Then if $f$ and  its restriction to  the  fixed point set induce  isomorphisms  in cohomology   it induces  isomorphisms in $H^\ast_{S^1}$ and $K^\ast$ and then all other  types of equivariant cohomologies  $ ^- H^\ast_{S^1}, \ \ P H^\ast_{S^1}, \ \ \mathbb PH^\ast_{S^1}.$

If $\tilde M$ is a compact smooth $S^1-$ manifold in view of  Proposition \ref{P3}  (5)  one identifies 
$^-H^r_{S^1}(M)$ to $H^{S^1}_{n-r}(M;\mathcal O_M)$ and in view of this identification write $Pd_{n-r}.$
instead of $\mathbb B^r.$ The long exact sequence becomes  

\diagram 
\cdots \rto &K^r(F) \rto  &{H}_{n-r}^{S^1}(\tilde M; \mathcal O_M)\rto^{Pd_{n-r}} &H^{r-1}_{S^1}(\tilde M)\rto & K^{r+1}(F)\rto&\cdots .
\enddiagram
\hskip 2 in Fig 5
\vskip .1in

In case  that  the fixed point set is empty  we conclude that 
$$ Pd_{n-r}: {H}_{n-r}^{S^1}(\tilde M, \mathcal O_M) \to H^{r-1}_{S^1}(\tilde M)$$ is an isomorphism. In this case
the orbit space $M/ S^1$ is a $\mathbb Q-$homological manifold of dimension $(n-1)$, hence 
\begin{equation*}
\begin{aligned}
H_{n-r}^{S^1}(\tilde M;\mathcal O_M) =& H_{n-r}(M/ S^1 ;\mathcal O_{M/S^1})\\ 
H_{S^1}^{r-1}(\tilde M;\mathcal O_M) =& H ^{r-1}(M/ S^1 ;\mathcal O_{M/S^1})
\end{aligned}
\end{equation*}
 and  $Pd_{\ast}$ is  nothing but  the Poincar\'e   duality isomorphism for $\mathbb Q-$ homology manifolds.
In general the  long exact sequence Fig 5  measures the failure of the Poincar\'e duality map, $Pd_{\ast},$ to be an isomorphism.
\vskip .2in

\section{The free loop space and s-cohomology} \label{Ssc}

A more interesting example  is provided by the $S^-$manifold $\tilde {P^{S^1}}:= (P^{S^1}, \mu).$
Here $P^{S^1}$ denotes the  smooth manifold 
of smooth maps from $S^1$ to $P$  modeled by the  Frechet space $C^\infty(S^1, V)$ where $V$ is the  model for $P$ (finite or infinite dimensional Frechet space) cf \cite{B1}. This smooth manifold  is equipped with the canonical smooth $S^1-$action $\mu :S^1\times P^{S^1} \to  P^{S^1}$ defined by 
$$\mu(\theta, \alpha)(\theta')= \alpha (\theta+\theta'),\ \ \alpha: S^1\to P,\ \  \theta, \ \theta' \in S^1= \mathbb R/ 2\pi.$$    The  fixed points set of the action $\mu$ consists of the constant maps hence identifies with $P.$  This action is the restriction of the canonical action of $O(2),$ the group of isometries  of $S^1,$  to the subgroup of  orientation preserving isometries identified to  $S^1$ itself.  For any $x\in P$ viewed as a fixed point of $\mu$ the linearization representation is the regular representation of $S^1$ on $V= T_x(P).$  In view of Proposition \ref{P2} the action $\mu$ is good. 
The space $P^{S^1}$ is also equipped with the natural maps $\psi_k, \ k=1,2,\cdots,$ the geometric power maps and with the involution $\tau,$  
defined by

\begin{equation*}
\begin{aligned}
\psi_k (\alpha)(\theta)= \alpha(k\theta)\\
\tau (\alpha)(\theta)= \alpha(-\theta)
\end{aligned}
\end{equation*}
with  $\alpha\in P^{S^1}$, and $\theta\in S^1.$

The involution $\tau$ is the restriction of the action of $O(2)$ to the reflexion  $\theta\to -\theta$
in $S^1.$ 
Then  $(\Omega^\ast _X(P^{S^1}), d^\ast, i^X_\ast)$ is a mixed CDGA, hence a mixed complex with power maps  $\Psi_k, \tau $ and involution $\tau$ induced from $\psi_k$ and $\tau.$ 

Suppose $f:P_1\to P_2$ is a smooth map. It  induces a smooth equivariant map  
$f^{S^1} : {P_1^{S^1} }\to {P_2^{S^1}}$  whose restriction to the fixed points set is exactly $f.$  
If $f$ is a homotopy equivalence  then so is $f^{S^1}.$ 

Introduce the notation 
\begin{equation*}
\begin{aligned}
hH^\ast(P):= H^\ast (P^{S^1}),\\
cH^\ast(P):= H^\ast_{S^1} (\tilde{P^{S^1}}),\\
sH^\ast (P):= ^- H^\ast _{S^1}(\tilde {P^{S^1}}).
\end{aligned}
\end{equation*}
The assignments  $P\rightsquigarrow  hH^\ast(P),$ $P\rightsquigarrow  cH^\ast(P),$          
$P\rightsquigarrow  sH^\ast(P)$ are functors 
\footnote {the notations $hH^\ast, cH^\ast$ are motivated by the Hochschild resp. cyclic homology interpretation of these functors,
while $sH^\ast$ is abbreviation from string cohomology.}  with the property that $hH^\ast (f), cH^\ast (f), sH^\ast(f)$ are   isomorphism if $f$ is a homotopy equivalence, hence they are all homotopy functors . They are related by the commutative diagram below. This diagram  is the same as diagram (Fig 3) applied to $\tilde M= \tilde{P^{S^1}}$ with
the specifications provided by Proposition \ref{P3}.

\diagram
\cdots \rto&cH^{r-2}(P)\rto^{S^{r-2}}\dto^{id} & cH^{r}(P) \rto ^{J^r} \dto^{\mathbb I^r} & hH^{r}(P)\rto^{B^r}\dto   & cH^{r-1}(P)   \rto \dto^{id}  &  \cdots\\
\cdots \rto&cH^{r-2}(P)\rto ^{\mathbb I^{r-2}}\dto ^S        & K^r (P)\rto^{\mathbb J^r}                            & sH^{r}(P)\rto^{\mathbb B^r}                        & cH^{r-1}(P)  \rto  &\cdots \\
&cH^r(P)\rto \urto^{\mathbb I^r}&\underset{\rightarrow} \lim\  cH^{r+2k}(P)\uto
\enddiagram 
\hskip 2in Fig 6

\noindent where
$\underset{\rightarrow} \lim \ \ cH^{r+2k}(P)= \underset{\rightarrow} \lim \{\cdots \to cH^{r+2k}(P)\overset {S}{\rightarrow} cH^{r +2k+2}(P)\to \cdots\}.$ 
 
The linear map $$cH^\ast(P)= H^\ast_{S^1}(\tilde {P^{S^1})}\overset {\mathbb I^r}{\rightarrow} K^\ast$$   factors through $\underset{\rightarrow}{\lim}\  cH^{r+2k}(P)$ which depends  only on the fundamental group of $P.$  Indeed, it is shown in \cite {B3} that  if $P(1)$ \footnote {the notation for the first stage Postnikov term of $P.$} is a smooth manifold (possibly of infinite dimension) which has the homotopy type of $K(\pi,1)$ and $p(1): P\to P(1)$ is smooth map inducing
an isomorphism for the fundamental group then 
$\underset{\rightarrow}{\lim} H^{r+2k}_{S^1}(\tilde {P^{S^1}}) \to\underset{\rightarrow}{\lim}H^{r+2k}_{S^1}(\tilde {P(1)^{S^1}}) $ is an isomorphism. 
cf \cite {B3}.
Then if one denotes by $\overline {cH^\ast}(M):= \text{coker} (cH^\ast (M)\to cH^\ast (pt))$ and  
$\overline {K^\ast}(M):= \text{coker} (K^\ast (M)\to K^\ast (pt))$\footnote { clearly $K^r (pt)= H^r_{S^1}(pt)= 
\kappa \ \ \text{resp.} \ \ 0$ if $r$ is even resp. odd} one  obtains

\begin {theorem}\label{T2}
If $P$ is a 1-connected smooth manifold then we have the following short exact sequence:
 $$ 0 \to   \overline K^r(P)\otimes \kappa \overset {\mathbb J^r}\rightarrow  {sH}^r (P) \overset {\mathbb B^r}\rightarrow  \overline cH^{r-1}(P)\to 0$$
where $\kappa= \mathbb R$ or $\mathbb C.$
\end{theorem}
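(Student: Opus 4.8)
The plan is to obtain the asserted short exact sequence as a small piece of the long exact sequence forming the second row of diagram Fig 6. First I would write out that row,
\[
\cdots\to cH^{r-2}(P)\xrightarrow{\mathbb I^{r-2}}K^r(P)\xrightarrow{\mathbb J^r}sH^r(P)\xrightarrow{\mathbb B^r}cH^{r-1}(P)\xrightarrow{\mathbb I^{r-1}}K^{r+1}(P)\to\cdots,
\]
and use exactness at $K^r(P)$ and at $cH^{r-1}(P)$ to split off
\[
0\to\operatorname{coker}(\mathbb I^{r-2})\xrightarrow{\mathbb J^r}sH^r(P)\xrightarrow{\mathbb B^r}\ker(\mathbb I^{r-1})\to 0.
\]
Everything then reduces to the two identifications $\operatorname{coker}(\mathbb I^{r-2})\cong\overline K^r(P)\otimes\kappa$ and $\ker(\mathbb I^{r-1})\cong\overline{cH^{r-1}}(P)$, after which $\mathbb J^r$ and $\mathbb B^r$ are the induced maps and the labels agree with Fig 6.

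To compute the two ends I would use the factorization displayed in the bottom triangle of Fig 6: each $\mathbb I^r$ is the composite
\[
cH^r(P)\xrightarrow{\ \sigma^r\ }\varinjlim_k cH^{r+2k}(P)\xrightarrow{\ \iota^r\ }K^r(P),
\]
where $\sigma^r$ is the canonical map into the $S$-colimit and $\iota^r$ is the map $PH^\ast\to\mathbb PH^\ast$ induced by the inclusion $\bigoplus\subset\prod$. The decisive input is the result of \cite{B3} that $\varinjlim_k cH^{r+2k}(P)$ depends only on $\pi_1(P)$; for a $1$-connected $P$ the projection $c:P\to pt$ therefore induces an isomorphism $\varinjlim_k cH^{r+2k}(P)\cong\varinjlim_k cH^{r+2k}(pt)$. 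A direct computation for the point, whose mixed complex is $\kappa$ in degree $0$ with vanishing differentials, gives $\varinjlim_k cH^{r+2k}(pt)=K^r(pt)$ (equal to $\kappa$ for $r$ even and $0$ for $r$ odd) and shows that $\iota_{pt}$ is an isomorphism onto $K^\ast(pt)$.

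Next I would split all three functors through the basepoint $j:pt\to P$, $c:P\to pt$ into a constant summand $c^\ast(\,\cdot\,)$ and a reduced summand; by naturality both $\sigma^\ast$ and $\iota^\ast$ respect this splitting. Since the reduced colimit $\varinjlim_k\overline{cH^{r+2k}}(P)$ is the reduced value at $\pi_1=0$, it vanishes, so $\sigma^r$ annihilates $\overline{cH^r}(P)$ and induces an isomorphism from the constant summand onto $\varinjlim_k cH^{r+2k}(P)$; moreover $\iota^r$ restricted to this image equals $c^\ast\iota_{pt}$, hence is injective. Consequently $\ker(\mathbb I^{r-1})=\ker\sigma^{r-1}=\overline{cH^{r-1}}(P)$. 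For the cokernel, $\im(\mathbb I^{r-2})=\iota^{r-2}(\im\sigma^{r-2})$ is the line $\kappa\cdot 1\subset H^0(P)$ inside $K^r(P)$ when $r$ is even and is $0$ when $r$ is odd; in either parity the quotient is the reduced even, respectively odd, cohomology product, i.e. $\operatorname{coker}(\mathbb I^{r-2})=\overline K^r(P)\otimes\kappa$, the scalar field being $\kappa$. Substituting these two identifications into the displayed short exact sequence finishes the argument.

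The hard part is the geometric input from \cite{B3} that the $S$-colimit of $cH^\ast(P)$ is a functor of $\pi_1(P)$ alone, equivalently that the reduced colimit vanishes for a $1$-connected $P$; granting this, the remainder is the splitting and diagram chase above. The only other point demanding care is to carry the two parities of $r$ separately, since $\varinjlim_k cH^{r+2k}(pt)$, and hence the image of $\iota$, is concentrated in even degrees.
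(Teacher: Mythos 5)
Your proof is correct and follows essentially the same route as the paper's: the paper also extracts the sequence from the second row of Fig 6, using the factorization of $\mathbb I^\ast$ through $\varinjlim cH^{\ast+2k}(P)$ together with the result of \cite{B3} that this colimit depends only on $\pi_1(P)$, hence coincides with that of a point when $P$ is $1$-connected. The remaining details you supply --- the computation for the point, the splitting into constant and reduced summands, and the identification of $\ker(\mathbb I^{r-1})$ and $\operatorname{coker}(\mathbb I^{r-2})$ --- are exactly what the paper leaves implicit in passing from those two observations to the statement.
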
 

 \begin{observation} \label{O2}
 The vector space $K^\ast(P)$ can be identified via the Chern character to the Atiyah--Hirtzebruch (complex) $K-$theory tensored with the field $\kappa= \mathbb R \text{or} \ \mathbb C,$ depending of what sort of differential forms one consider (real or complex valued).  When $P$ is 1-connected $\overline cH^\ast (P)$ identifies to $\text{Hom}(\tilde A_{\ast}(P), k)$ where $\tilde A_{\ast}(P)$
denotes the reduced Waldhaussen algebraic  $K-$theory\footnote {often referred to as $A-$ theory.}, cf \cite{B2}. From this perspective $sH^\ast$ unifies topological (Atiyah--Hirtzebruch) $K-$theory  and Waldhaussen algebraic $K-$theory.
\end{observation}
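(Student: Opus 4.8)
The plan is to establish the two asserted identifications separately. The first is essentially a restatement of the classical Chern character isomorphism, while the second rests on the deep comparison between cyclic theory and Waldhausen's $A$-theory, and is where all the real content lies.

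For the identification of $K^\ast(P)$, I would begin from Proposition \ref{P3}(4) applied to $\tilde M = \tilde{P^{S^1}}$, whose fixed point set is $P$. This already gives $K^r(P) = \prod_k H^{r+2k}(P)$, that is, the product of the even (resp. odd) de Rham cohomology groups of $P$, which is precisely the target of the Chern character. I would then invoke the Atiyah--Hirtzebruch theorem: the Chern character $ch\colon K^\ast_{top}(P)\otimes \mathbb Q \to \prod_k H^{\ast+2k}(P;\mathbb Q)$ is an isomorphism, and tensoring with $\kappa = \mathbb R$ or $\mathbb C$ and passing to the de Rham realization of $H^\ast(P;\kappa)$ yields $K^\ast_{top}(P)\otimes\kappa \cong K^\ast(P)$. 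For finite-dimensional $P$ the products are finite and there is nothing more to check; in the infinite-dimensional case the statement is to be read in the pro-completed sense, so that the product of cohomology groups matches the completed form of the Chern character.

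For the identification of $\overline{cH}^\ast(P)$ I would proceed in two stages. First, recall from the discussion preceding Theorem \ref{T2}, and from \cite{J}, \cite{BFG}, \cite{B2}, that $cH^\ast(P) = H^\ast_{S^1}(\tilde{P^{S^1}})$ is the cyclic cohomology of the de Rham (equivalently cochain) algebra of $P$, so that its reduction $\overline{cH}^\ast(P)$ is dual to the reduced cyclic homology of $P$. Second, I would invoke Goodwillie's theorem together with its $A$-theoretic refinement: for a $1$-connected space $P$ the reduced rational Waldhausen $K$-theory $\tilde A_\ast(P)\otimes\kappa$ is isomorphic, up to the standard degree shift coming from the $S^1$-action, to the reduced cyclic homology of $P$ with coefficients in $\kappa$. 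Dualizing this isomorphism produces $\overline{cH}^\ast(P)\cong \text{Hom}(\tilde A_\ast(P),\kappa)$, exactly as recorded in \cite{B2}.

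The main obstacle is that the second identification cannot be derived from the formalism of this paper alone: it depends genuinely on the external input of Goodwillie's comparison of relative $K$-theory with cyclic homology and on the rational computation of $A$-theory for simply connected spaces. The hypothesis of $1$-connectedness is essential precisely here, since it is the regime in which the cyclotomic trace becomes a rational equivalence onto the cyclic side and in which the minimal-model / de Rham computation of $cH^\ast$ agrees with that of the topological free loop space, as in Theorem \ref{T3}. Accordingly the Observation is proved by assembling these identifications rather than by a self-contained argument, with the analytic weight carried by the cited literature.
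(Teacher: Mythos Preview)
Your proposal is essentially correct, and in fact it goes further than the paper does: the paper offers no proof of Observation~\ref{O2} at all. It is stated as a remark, with the first identification left implicit (the definition of $K^\ast(P)$ in Proposition~\ref{P3}(4) already \emph{is} the target of the Chern character, so nothing beyond the classical Chern character isomorphism is being claimed) and the second identification delegated entirely to the citation ``cf.~\cite{B2}''. Your two-stage outline---Proposition~\ref{P3}(4) plus the Chern character for $K^\ast$, and the cyclic-homology interpretation of $cH^\ast$ combined with the rational comparison of $A$-theory and cyclic homology for $1$-connected spaces---is precisely the content behind that citation, and you are right to flag that the second step is not derivable from the formalism of the present paper but rests on the results of \cite{B2}, \cite{BF}, and Goodwillie~\cite{G}.

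One small refinement: the relevant input for the $A$-theory identification is not quite ``Goodwillie's theorem'' in the sense of the relative $K$-theory/cyclic homology comparison, but rather the direct computation (due to the author and Fiedorowicz, building on Goodwillie's free-loop-space model) that for $1$-connected $P$ the rationalized reduced $A$-theory is the reduced $S^1$-equivariant homology of $P^{S^1}$; dualizing gives $\overline{cH}^\ast(P)\cong \mathrm{Hom}(\tilde A_\ast(P),\kappa)$. This is exactly what \cite{B2} records, so your conclusion and your assessment of where the weight lies are both accurate.
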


\begin{observation}\label {O3}
In view of the definition of $^-H^\ast_\beta(C^\ast, \delta^\ast, \beta_\ast),$  cf. section \ref {SMC},  observe that 
$sH^\ast(P)$ is represented by infinite sequences 
\footnote {$sH^\ast (P)$ is the cohomology of the cochain complex $(^-C^\ast, ^-D^\ast)$
with $^- C^r= \prod _{k\geq 0} \Omega^{r+2k}_{inv} (P^{S^1})$ and 
$^-D^r(\cdots,   \omega_{r+2}, \omega_{r})= (\cdots,   (i^X \omega_{r+2} + d \omega_r)),$ cf. section \ref{SMC}.},
rather than eventually finite sequences  of invariant differential forms on  $P^{S^1}.$  If instead of ``infinite sequences"  we would have considered  ``eventually finite sequences''  the outcome would have been different for  infinite dimensional manifolds.
The difference between ``infinite sequences"  and "eventually finite sequences"  exists only for infinite dimensional manifolds which $P^{S^1}$ always is.
\end{observation}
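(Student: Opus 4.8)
The plan is to read the observation as a comparison between two cochain complexes built from the \emph{same} mixed complex $(\Omega^\ast_X(P^{S^1}), d^\ast, i^X_\ast)$: the complex $^-\mathcal C_\beta$ actually used, with $^-C^r=\prod_{k\ge 0}\Omega^{r+2k}_X$, and the hypothetical variant $^-\mathcal C_\beta^\oplus$ with $^-C^r_\oplus=\bigoplus_{k\ge 0}\Omega^{r+2k}_X$, both carrying the same operator $^-D^\ast_\beta$. First I would check that $^-D^\ast_\beta$ preserves finite support: if $(\cdots,\omega_{r+2},\omega_r)$ has only finitely many nonzero entries, so does its image, since the entry of $^-D^\ast_\beta$ in degree $r+1+2k$ is $\beta\omega_{r+2k+2}+\delta\omega_{r+2k}$ and both summands vanish once $k$ is large. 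Hence $^-\mathcal C_\beta^\oplus$ is a genuine subcomplex of $^-\mathcal C_\beta$, and the inclusion induces a natural comparison map $\kappa^\ast:{}^-H^\ast_{\beta,\oplus}\to sH^\ast(P)$. The whole observation is the assertion that $\kappa^\ast$ is an isomorphism in finite dimensions but not in general.

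For the finite-dimensional half I would argue directly. If $\dim M=n<\infty$ then $\Omega^j_X(M)\subset\Omega^j(M)=0$ for $j>n$, so for each fixed $r$ only the finitely many factors with $r+2k\le n$ survive and $\prod_{k\ge0}\Omega^{r+2k}_X=\bigoplus_{k\ge0}\Omega^{r+2k}_X$ already on the nose. Thus the two complexes coincide degreewise, $\kappa^\ast=\mathrm{id}$, and no difference can arise. This simultaneously settles the last sentence of the observation once I note that $P^{S^1}$ is infinite-dimensional whenever $\dim P\ge 1$ (it is modeled on $C^\infty(S^1,V)$ with $V\ne 0$) and carries nonzero invariant forms in infinitely many degrees: here I would invoke Proposition \ref{P3}(1), $H^\ast_X(P^{S^1})=H^\ast(P^{S^1})$, together with the classical fact that the free loop space of a $1$-connected $P$ has nonzero cohomology in infinitely many degrees, so that $\Omega^j_X(P^{S^1})\ne 0$ for infinitely many $j$ and $\prod\ne\bigoplus$ strictly.

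To see that this strict inequality at the cochain level survives to cohomology — the real content — I would exhaust $^-\mathcal C_\beta^\oplus$ by the finite truncations $F_N^r:=\bigoplus_{k\le N}\Omega^{r+2k}_X$, each a subcomplex (the same support computation shows $^-D^\ast_\beta$ maps $F_N$ into $F_N$), so that $^-\mathcal C_\beta^\oplus=\varinjlim_N F_N$. By Proposition \ref{P1}(1) this yields $^-H^\ast_{\beta,\oplus}=\varinjlim_N H^\ast(F_N)$, a colimit retaining only classes representable by \emph{finite} sequences. By contrast $^-\mathcal C_\beta$ is the full product, and through the short exact sequence $0\to{}^+\mathcal C^{\ast-2}_\beta\to\mathbb P\mathcal C^\ast\to{}^-\mathcal C^\ast_\beta\to 0$ of Section \ref{SMC}, together with the Milnor-type sequence of Proposition \ref{P1}(2), it inherits the features of an inverse limit with a possibly nonzero $\varprojlim^1$ term (the term denoted $\lim'$ there). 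The discrepancy between $\varinjlim$ and $\varprojlim$ is precisely what I would exhibit: on the model mixed complex with $\delta=\beta=0$ and $C^j\ne0$ for infinitely many $j$ one has $^-D^\ast_\beta=0$, whence $sH^r=\prod_{k}C^{r+2k}$ while $^-H^r_{\beta,\oplus}=\bigoplus_k C^{r+2k}$, and these are unequal.

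The main obstacle I anticipate is the last step for the \emph{actual} operators $(d,i^X)$ on $P^{S^1}$: one must produce a genuine class of $sH^\ast(P)$ represented by an honestly infinite sequence of invariant forms that is not cohomologous to any finite one, i.e. witness $\varinjlim\ne\varprojlim$ (equivalently a nonvanishing $\varprojlim^1$) for the tower $cH^r(P)\xrightarrow{S}cH^{r+2}(P)\xrightarrow{S}\cdots$ attached to a specific $1$-connected $P$. Controlling this tower, rather than merely the cochain groups, is where the argument has to do real work; the trivial-differential model above only shows the phenomenon is possible, and the passage to the de Rham complex of $P^{S^1}$, where $\beta=i^X$ is genuinely nonzero, would require the minimal-model description of Section \ref{S:minimal} to make the behavior of the $S$-tower explicit.
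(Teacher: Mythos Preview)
The paper does not prove this statement; it is recorded as an observation in the literal sense, justified only by the footnote restating the definition of $^-\mathcal C_\beta$ from Section~\ref{SMC}. All the paper asserts is that $sH^\ast(P)$ is, by definition, the cohomology of the product complex $\prod_{k\ge 0}\Omega^{r+2k}_X(P^{S^1})$, that on a finite-dimensional manifold this product is finite (so coincides with the direct sum), and that $P^{S^1}$ is always infinite-dimensional. No claim is made, let alone proved, that the \emph{cohomologies} of the product and direct-sum complexes differ for some specific $P$.

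Your proposal correctly recovers this much: the argument that $^-\mathcal C_\beta^\oplus\subset{}^-\mathcal C_\beta$ is a subcomplex, and that the inclusion is an equality when $\dim M<\infty$, is exactly the content of the observation. Your detour through $H^\ast(P^{S^1})$ to show $\Omega^j_X(P^{S^1})\ne 0$ for infinitely many $j$ is unnecessary and in fact fails for contractible $P$; the point is simply that $P^{S^1}$ is modeled on the infinite-dimensional Fr\'echet space $C^\infty(S^1,V)$, so $\Omega^j(P^{S^1})\ne 0$ for all $j$, and averaging gives the same for $\Omega^j_X$.

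Where you go beyond the paper---attempting to exhibit an actual discrepancy in cohomology---you are proving a sharper statement than the observation claims, and the argument does not close. Your appeal to Proposition~\ref{P1}(2) and a $\varprojlim{}^1$ obstruction is misdirected: that exact sequence controls $\mathbb PH^\ast$ via the tower $^+H^\delta_{\ast+2k}$, not $^-H^\ast_\beta$ via the tower $cH^{r+2k}$ under $S$. The direct-sum variant $^-\mathcal C_\beta^\oplus$ is not one of the complexes treated in Section~\ref{SMC}, and its cohomology is not identified there with any colimit of $cH^\ast$. You correctly flag this as the main obstacle; the paper simply does not address it.
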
 


\vskip .2in
The power maps $\psi_k$ induce the endomorphisms  $h\Psi_k,$ $c\Psi_k$ $s\Psi_k$
and $K\Psi_k$ on  $hH^\ast, cH^\ast, sH^\ast,$ and $K^\ast.$

In general only  $K\Psi_k$ 
are easy to describe. Precisely 
if $r$ is even then  $K^r=  \prod_{i\geq 0} H^{2i}(P)$ 
and if $r$ is odd  then $K^r=  \prod_{i\geq 0} H^{2i+1}(P),$ and in both cases 
$K\Psi_k= \prod_{i\geq 0} k^{i-r}Id.$

The symmetric 
part   with respect to the involution $c\Psi_{-1},$  i.e. the eigenspaces corresponding to the 
eigenvalues $+1$ 
identifies with 
$H^\ast _{O(2)}(P^{S^1}),$ 
the  equivariant cohomology for the canonical $O(2)-$action.

However, if $P$ is 1-connected, in view of the  section \ref {SFree},  one can  describe both the eigenvalues and the eigenspaces of the power maps $h\Psi_k$ and $c\Psi_k$ and then of $s\Psi_k.$
We have: 
\begin{theorem}\label{T3}

Let $P$ be a 1-connected manifold.

1. All  eigenvalues of the endomorphisms $ h\Psi_k$ and $ c\Psi_k$ are $k^r, r=0,1, 2\cdots,$ and the eigenspaces  corresponding to $k^r$ are independent of $k$ provided $k\geq 2.$ 

2. Denotes these eigenspaces  by $hH^\ast(M)(r)$ and $ cH^\ast (M)(r).$ Then 

$hH^\ast(0)= H^\ast(X;\kappa),  cH^{\ast}(0)= H^{\ast+1}(X;\kappa),$ and 

$hH^r(p)=  cH^r(p)= 0, p\geq r+1.$

3. If $\sum_i \dim \pi_i(P)\otimes \kappa < \infty, $ $\kappa$ the field of real or complex numbers and 
$\sum_i \dim (H^i(P) )<\infty$ then for any $r\geq 0$ one has 
$$\sum_i \dim hH^i(P) (r) < \infty, \ \ \sum_i\dim cH^i(P)(r) < \infty.$$
\end{theorem}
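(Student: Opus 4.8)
The plan is to reduce all three parts to an explicit computation on the free loop construction of a minimal model of $P$. By Theorem \ref{T1} and the extension procedure following it, it suffices to work with a $1$-connected minimal model $(\Lambda[V],d_V)$, $V=V^{\geq 2}$ (Observation \ref{O0}), and to compute $hH^\ast(P)$ and $cH^\ast(P)$ from the free loop mixed CDGA of section \ref{SFree}. Write this CDGA as $(\Lambda[V\oplus\bar V],D)$, where $\bar V$ is the degree $-1$ shifted copy of $V$, $D|_{\Lambda[V]}=d_V$, $D(\bar v)=-\overline{d_V v}$ extended as a derivation, and the interior differential $\beta$ is the derivation with $\beta(v)=\bar v$, $\beta(\bar v)=0$. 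First I would introduce the weight grading assigning weight $0$ to $V$ and weight $1$ to $\bar V$. Since $d_V(V)\subseteq\Lambda^{\geq 2}[V]$, both $D$ and $\beta$ preserve the number of bar-generators, so $D$ is weight-preserving and $(\Lambda[V\oplus\bar V],D)=\bigoplus_r C_{(r)}$ splits into weight-homogeneous subcomplexes, with $C_{(r)}\cong\Lambda[V]\otimes W_r$, where $W_r$ is the weight-$r$ part of $\Lambda[\bar V]$.

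For part (1), the algebraic power map $\Psi_k$ of section \ref{SFree} acts as the identity on $V$ and by multiplication by $k$ on $\bar V$, hence by $k^{w}$ on the weight-$w$ summand, compatibly with $\beta$ in the sense of the relations of section \ref{SMC}. Thus $h\Psi_k$ acts on $hH^\ast(P)=\bigoplus_r H^\ast(C_{(r)})$ by $k^r$ on $hH^\ast(P)(r):=H^\ast(C_{(r)})$; its eigenvalues are exactly the $k^r$ that occur, and for $k\geq 2$ the numbers $k^r$ are pairwise distinct, so the $k^r$-eigenspace equals the intrinsic weight-$r$ piece, independently of $k$. For $c\Psi_k$ I would obtain the eigenstructure from that of $h\Psi_k$ by a bootstrap along the long exact sequence (Fig 6), using the intertwining relations of $J^\ast$, $S^\ast$, $B^\ast$ and $\mathbb I^\ast$ with the respective powers of $k$ recorded at the end of section \ref{SMC}; equivalently one grades the equivariant complex $^+\mathcal C_\beta$ directly, combining the weight grading above with the formula $^+\Psi_k^r(\omega_r,\omega_{r-2},\dots)=(\Psi^r(\omega_r),\tfrac1k\Psi^{r-2}(\omega_{r-2}),\dots)$, which assigns the slot-$j$, internal-weight-$w$ part the eigenvalue $k^{w-j}$. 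The equivariant case is slightly more delicate because one must also check that the negative-weight summands are acyclic, so that only eigenvalues $k^r$ with $r\geq 0$ survive.

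Part (2) is read off from the weight-homogeneous subcomplexes. The weight-$0$ subcomplex $C_{(0)}$ is precisely $(\Lambda[V],d_V)$, so $hH^\ast(P)(0)=H^\ast(\Lambda[V],d_V)=H^\ast(P;\kappa)$; and the $cH$-weight-$0$ subcomplex of $^+\mathcal C_\beta$ is the total complex of the internal-weight filtration with differential $D+\beta$, whose cohomology a direct inspection identifies with $H^{\ast+1}(P;\kappa)$, the degree shift arising from the $\beta$-connecting maps. For the vanishing, a weight-$p$ monomial contains exactly $p$ bar-generators, and $V=V^{\geq 2}$ forces $\bar V=\bar V^{\geq 1}$, so such a monomial has cohomological degree at least $p$; hence $C_{(p)}^r=0$ for $r<p$, giving $hH^r(p)=0$ for $p\geq r+1$, and the same slot count ($r-2j\geq p+j$) yields $cH^r(p)=0$ for $p\geq r+1$.

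For part (3) the weight-$r$ subcomplex is $C_{(r)}=\Lambda[V]\otimes W_r$, with $W_r$ finite-dimensional because $V$, hence $\bar V$, is finite-dimensional under the hypothesis $\sum_i\dim\pi_i(P)\otimes\kappa<\infty$. Since all generators have positive degree and there are finitely many of them, each graded piece $C_{(r)}^i$ is finite-dimensional, so $\sum_i\dim hH^i(P)(r)<\infty$ is equivalent to the vanishing of $H^\ast(C_{(r)})$ above some degree depending on $r$. This degree bound is where the hypothesis $\dim H^\ast(P)<\infty$, i.e. the ellipticity of $P$, must enter, and it is the \emph{main obstacle}: the infinitely many powers of the even generators of $V$ that could make $H^\ast(C_{(r)})$ unbounded must be shown to be cancelled through $D$. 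The plan is to pass to the associated pure model of the elliptic algebra $(\Lambda[V],d_V)$, where $d_V(V^{\mathrm{ev}})=0$ and $d_V(V^{\mathrm{odd}})\subseteq\kappa[V^{\mathrm{ev}}]$, so that $C_{(r)}$ becomes an explicit Koszul-type complex; finiteness of $H^\ast(\Lambda[V],d_V)$ is equivalent to $\kappa[V^{\mathrm{ev}}]/(d_V V^{\mathrm{odd}})$ being finite-dimensional, and this truncation propagates to every weight exactly as in the model computation for $P=S^2$ (where each $hH^\ast(S^2)(r)$, $r\geq 1$, is two-dimensional, concentrated in degrees $2r-1$ and $2r+2$). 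The $S^1$-equivariant minimal model of $P^{S^1}$ of section \ref{S:minimal} packages this cancellation and supplies the degree bound, and the corresponding statement for $cH^\ast(P)(r)$ follows from part (1) together with the long exact sequence (Fig 6), since each $cH^\ast(P)(r)$ is squeezed between weight pieces of $hH^\ast$ and of $K^\ast$.
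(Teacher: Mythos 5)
Your computation on the free loop construction essentially reproduces the paper's proof of the \emph{algebraic} statement, Theorem \ref{T4}: your weight decomposition is Observation \ref{O3}, and your arguments for parts (1) and (2) are the ones the paper uses there. But the step you dispose of in your first sentence --- that ``by Theorem \ref{T1} and the extension procedure following it, it suffices \dots to compute $hH^\ast(P)$ and $cH^\ast(P)$ from the free loop mixed CDGA'' --- is precisely what Theorem \ref{T1} cannot deliver, and it is the actual content of Theorem \ref{T3}. The functors $hH^\ast$ and $cH^\ast$ are defined via invariant differential forms on the infinite-dimensional $S^1$-manifold $P^{S^1}$, whereas Theorem \ref{T1} lives entirely inside the category of CDGA's: it extends the algebraic functors $HH^\ast, CH^\ast$ from fc-CDGA's to all c-CDGA's, but it says nothing about $H^\ast(P^{S^1})$ or $H^\ast_{S^1}(\tilde{P^{S^1}})$. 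The identification $hH^\ast(P)\cong HH^\ast(\Lambda[V],d_V)$ and $cH^\ast(P)\cong CH^\ast(\Lambda[V],d_V)$, compatibly with the power maps, is a theorem with genuine geometric input: the Sullivan--Vigu\'e quasi-isomorphism $\theta:(\Lambda[V\oplus\overline V],\delta_V)\to(\Omega_X(P^{S^1}),d)$ of \cite{VS}, its equivariant extension $\tilde\theta$ on the $\Lambda[u]$-complexes from \cite{BV}, and the homotopy commutativity of the resulting cube with $\Psi_k^P$ and $\Psi_k$ verified in \cite{BFG}. This is exactly what Section \ref{S:minimal} supplies, and it is only through it that the paper derives Theorem \ref{T3} from Theorem \ref{T4}. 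As written, your argument proves Theorem \ref{T4}, not Theorem \ref{T3}.

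Your part (3) is also left open: you correctly isolate the degree bound as ``the main obstacle,'' but then defer it to pure models, Koszul-type complexes, and the assertion that the equivariant minimal model ``packages this cancellation'' --- that is not an argument (and passing to a pure model changes the differential, so relating its cohomology to that of $C_{(r)}$ would itself need proof). The paper's argument, Observation \ref{O4}, needs no ellipticity theory: filter the weight-$r$ complex by the total degree $p$ of the $\overline V$-part; minimality and $1$-connectedness (Observation \ref{O0}) force $\delta_V$ to strictly lower this filtration except for the $d_V\otimes\mathrm{id}$ term, so the associated graded complex is $(\Lambda[V],d_V)\otimes F_p(\overline V^{\otimes r})/F_{p-1}(\overline V^{\otimes r})$, whence
$$\sum_i\dim H^i\bigl(\Lambda[V]\otimes\overline V^{\otimes r},\delta_V\bigr)\ \leq\ (\dim V)^r\cdot\sum_i\dim H^i(\Lambda[V],d_V)\ <\ \infty ,$$
which gives (3) at once for $HH$, and for $CH$ by the same decomposition of $\mathrm{Im}(i^V)$. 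Two smaller points: $\beta=i^V$ does not ``preserve the number of bar-generators'' --- it raises it by one, which is what makes $\Psi_k\circ\beta=k\,\beta\circ\Psi_k$ hold; and your worry about the slots of $^+\mathcal C_\beta$ carrying eigenvalues $k^{w-j}$ with $w<j$ is legitimate, but the paper sidesteps it by using the $i^V$-acyclicity of $\Lambda^+[V\oplus\overline V]$ (Observation \ref{O3} and the lemma of Section \ref{SMC}) to replace $^+\mathcal C_{i^V}$ by $(\mathrm{Im}(i^V),\delta_V)$, which carries only non-negative weights, rather than by a bootstrap along the long exact sequence.
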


If $P$ is "formal" in the sense of rational homotopy theory \footnote{  i.e.  for each connected component  of $P$ De-Rham algebra and the cohomology algebra equipped with  the differential $0$ are homotopy equivalent, cf section \ref{SFree}.}, (a projective complex algebraic variety,
or more general a closed Kaehler manifold  is formal, cf \cite {DGMS} ) then the Euler Poincar\'e characteristic 
$$\chi^h(\lambda) := \sum _{i, r} \dim hH^i(r)\lambda^r$$ and $$\chi^c(\lambda):= \sum _{i, r} \dim cH^i(r)\lambda^r$$ can be explicitly calculated in terms of the numbers $\dim H^i(P),$ cf [B].  The explicit formulae  are  quite complicated. They require the results of P.Hanlon \cite {H} about the eigenspaces of Adams operations in Hochschild and cyclic homology as well as  the identification of $hH^\ast (P)$ resp. $cH^\ast (P)$ with the Hochschild resp. cyclic homology of the graded algebra $H^\ast(P).$ These are  not discussed in this paper but the reader can consult  \cite {BFG} 
and  \cite {B4} for precise statements.

The functor $\overline {sH}^r (P)$ is of particular  interest  in geometric topology. In  the case $P$ is 1-connected
it calculates in some ranges the homotopy groups of the  (homotopy) quotient space of homotopy equivalences  by the group of diffeomorphisms   \cite{B1}, \cite {B4}.

 \section {The free loop construction for CDGA} \label{SFree}

The "free loop " construction associates  to a free connected CDGA, \  $(\Lambda[V], d_V)$
 a  mixed CDGA,  $(\Lambda[V\oplus \overline V],\delta_V, i^V),$  endowed with   power maps $\Psi_k$ and involution $\tau$ defined as follows.

\begin{enumerate}
\item Let $\overline V=\oplus _{i\geq 0}\overline V^i$ with $\overline V^{i}:= V^{i+1}$
and let $\Lambda[V\oplus \overline V]$ be  the commutative graded algebra generated by $V\oplus \overline V.$ 
\item Let $i^V: \Lambda[V\oplus \overline V] \to \Lambda[V\oplus \overline V]$be  the unique internal differential (of degree $-1$) which extends $i^V(v)= \overline v$ and $i^V(\overline v)=0.$
\item 
Let $\delta_V: \Lambda[V\oplus \overline V] \to \Lambda[V\oplus \overline V]$ be the unique external differential (of degree $+1$)  which extends $\delta_V(v)=d (v)$ and $\delta (\overline v)=  - i^V(d(v)).$
\item Let $\Psi_k: (\Lambda[V\oplus \overline V], \delta_V)\to 
(\Lambda[V\oplus \overline V], \delta_V), k=-1, 1,2,\cdots$ be the unique morphisms of CDGA 
which extends $\Psi_k(v)= v, \Psi_k(\overline v)= k\overline v.$ We put $\tau:=\Psi_{-1}.$ The maps 
$\Psi_k$ $k\geq 1$ are called the power maps and $\tau$ the canonical involution. 
One has 
\begin{equation*}
\begin{aligned}
\Psi_k\cdot \Psi_r=& \Psi_{kr}\\
\Psi_k \cdot i_V=& k i_V\cdot \Psi_k
\end{aligned}
\end{equation*}
\item Let $\Lambda^+[V\oplus \overline V]$ be the ideal of $\Lambda[V\oplus \overline V]$  generated by 
$V \oplus \overline V$ or the kernel of the  augmentation which vanishes on $V \oplus \overline V.$
\end{enumerate}

Note that :
\begin{observation}\label{O3}

1.  $\text({Im}(i^V) ,\delta_V, 0)$ is a mixed sub complex of ($\Lambda^+[V \oplus \overline V],\delta_V, i^V)\subset (\Lambda[V\oplus \overline V],\delta_V, i^V)$

2. $\Psi_k$, $ k= -1, 1, 2,\cdots$  leave $(\Lambda^+[V\oplus \overline V],\delta_V, i^V) $ and $(\text{Im}(i^V) ,\delta)$  invariant  and have $k^r$ $r=0,1,2,\cdots$ as  eigenvalues.  These  are all eigenvalues. 

For $k \geq 2$ the eigenspace of $\Psi_r: \Lambda[V\oplus \overline V]\to \Lambda [V\oplus \overline V]$ corresponding to the eigenvalue $k^r$ is exactly 
$\Lambda[V]\otimes \overline V^{\otimes r},$ resp. $\Lambda^+[V\oplus \overline V]\cap \Lambda^[V]\otimes \overline V^{\otimes r},$
resp. $\text{Im}(i^V)(r)= \text{Im}(i^V)\cap \Lambda [V] \otimes \overline V^{\otimes r}$,  hence independent of $k.$ Each such eigenspace is $\delta_V-$invariant.

3. The mixed complex $(\Lambda^+[V\oplus \overline V],\delta_V, i^V)$ is $i^V-$acyclic,

4. We have the decomposition 

$$(\Lambda [V\oplus \overline V], \delta_V)= \bigoplus_{r\geq 1} (\Lambda[V]\otimes \overline V^{\otimes r}, \delta_V)$$
and the analogous decomposition for $(\Lambda^+ [V\oplus \overline V], \delta_V)$
and $(\text{Im}(i^V), \delta_V)$ referred from now on as the weight decompositions. 
\end{observation}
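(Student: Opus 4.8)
The plan is to organize everything around a single auxiliary grading on $\Lambda[V\oplus \overline V]$, the \emph{weight} (or bar-degree) $w$, defined by $w(v)=0$ for $v\in V$, $w(\overline v)=1$ for $\overline v\in\overline V$, and extended additively to monomials; its weight-$r$ component is exactly $\Lambda[V]\otimes\overline V^{\otimes r}$. First I would record how the three operators interact with $w$. The morphism $\Psi_k$ acts on a weight-$r$ monomial $m$ by $\Psi_k(m)=k^r m$ (it multiplies each of the $r$ bar-factors by $k$), so $\Psi_k$ preserves $w$ and is the scalar $k^r$ on the weight-$r$ summand; the derivation $\delta_V$ preserves $w$ because it does so on generators ($\delta_V v=dv\in\Lambda[V]$ has weight $0$, while $\delta_V\overline v=-i^V(dv)$ has weight $1$); and $i^V$ raises $w$ by one. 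This gives item (4) at once: since $\delta_V$ is weight-preserving, $(\Lambda[V\oplus\overline V],\delta_V)=\bigoplus_{r}(\Lambda[V]\otimes\overline V^{\otimes r},\delta_V)$ as a direct sum of subcomplexes, and the same argument restricts to $\Lambda^+$ and to $\text{Im}(i^V)$. It also gives item (2): for $k\ge 2$ the scalars $k^r$, $r=0,1,2,\dots$, are pairwise distinct, so the eigenspace decomposition of $\Psi_k$ coincides with the weight decomposition and is therefore independent of $k$, and each weight space is $\delta_V$-stable because $\Psi_k$ commutes with $\delta_V$. That $\Psi_k$ preserves $\Lambda^+$ (it fixes the augmentation) and $\text{Im}(i^V)$ (from the relation $\Psi_k i^V=k\,i^V\Psi_k$ built into the construction) then transports the statement to those subspaces.

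For item (1) I would use only the two relations already part of the mixed--CDGA structure, $(i^V)^2=0$ and the compatibility $\delta_V i^V+i^V\delta_V=0$. If $x=i^V(y)$ then $i^V(x)=(i^V)^2(y)=0$, so $i^V$ vanishes identically on $\text{Im}(i^V)$; and $\delta_V(x)=\delta_V i^V(y)=-i^V\delta_V(y)\in\text{Im}(i^V)$, so $\text{Im}(i^V)$ is $\delta_V$-stable. Hence $(\text{Im}(i^V),\delta_V,0)$ is a mixed subcomplex, and since $i^V$ raises $w$ by one its image has weight $\ge 1$ and therefore lies in $\Lambda^+[V\oplus\overline V]$.

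The hard part will be item (3), the $i^V$-acyclicity of $\Lambda^+[V\oplus\overline V]$, which is the only assertion requiring a construction rather than bookkeeping. Here I would exhibit an explicit contracting homotopy. Let $s$ be the degree $+1$ derivation determined by $s(\overline v)=v$ and $s(v)=0$, and let $N$ be the word-length derivation, i.e. the derivation with $N(x)=x$ on every generator $x\in V\cup\overline V$, so that $N$ acts on a monomial of word-length $n$ as multiplication by $n$. A one-line check on generators shows that the even derivation $i^V s+s\,i^V$ agrees with $N$ on $v$ and on $\overline v$, hence equals $N$ everywhere. Since $i^V$, $s$, and $N$ all preserve word-length, $N$ commutes with $i^V$, and on the augmentation ideal $\Lambda^+$, where $N$ is invertible (it is multiplication by $n\ge1$ on the word-length-$n$ part), I may set $h:=N^{-1}s$. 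Then
\[
i^V h+h\,i^V=N^{-1}(i^V s+s\,i^V)=N^{-1}N=\mathrm{id}_{\Lambda^+},
\]
so the identity of $(\Lambda^+,i^V)$ is null-homotopic. This yields $\ker(i^V_r)=\text{im}(i^V_{r+1})$ in every degree, together with surjectivity in the lowest degree (for $x$ of degree $0$, $x=i^V h(x)$ with $h(x)\in\Lambda^+$), which is precisely the $i^V$-acyclicity demanded by the definition. The only points needing care are that $s$ and $h$ map $\Lambda^+$ into itself and that $N^{-1}$ is well defined there, both consequences of word-length preservation; I would also note that the same computation, run on all of $\Lambda[V\oplus\overline V]$, shows $H_\ast(\Lambda[V\oplus\overline V],i^V)=\kappa$.
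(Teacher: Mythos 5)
Your proposal is correct. The paper states this Observation without proof, so there is no argument in the text to compare against; what you have written supplies exactly the bookkeeping the paper leaves implicit. Your auxiliary weight grading is the same decomposition the paper itself introduces in item 4 (and which its eigenspace description in item 2 presupposes), and items (1), (2), (4) do indeed reduce, as you show, to checking the behaviour of $\delta_V$, $i^V$ and $\Psi_k$ on generators together with the relations $(i^V)^2=0$, $\delta_V i^V + i^V\delta_V=0$ and $\Psi_k i^V = k\, i^V \Psi_k$. The only genuinely nontrivial point, item (3), you handle with the contraction $h=N^{-1}s$, where $s$ is the degree $+1$ derivation with $s(\overline v)=v$, $s(v)=0$ and $i^V s + s\, i^V = N$ is the word-length derivation; this Koszul-type homotopy is the standard argument (it is essentially the one in the cited papers of Vigu\'e-Poirrier--Sullivan and Burghelea--Vigu\'e-Poirrier), and the points you flag --- that $s$ and $N^{-1}$ preserve $\Lambda^+[V\oplus\overline V]$, that $N$ commutes with $i^V$, and that invertibility of $N$ holds precisely on the augmentation ideal --- are exactly what makes it work. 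You also correctly verify both clauses of the paper's definition of $\beta$-acyclicity from Section 2, namely $\ker(i^V_r)=\mathrm{im}(i^V_{r+1})$ and surjectivity of $i^V_1$ onto the degree-zero part. Two minor remarks: the paper's displayed sum in item 4 should run over $r\geq 0$ rather than $r\geq 1$ (the weight-zero summand is $\Lambda[V]$), as your version implicitly has it; and for $k=\pm 1$ the list of eigenvalues collapses, which is consistent with the statement's restriction to $k\geq 2$ for the eigenspace identification.
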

Consider the complex $(\Lambda[V]\otimes\overline V^{\otimes r}, \delta_V)$ and the filtration provided by $\Lambda[V]\otimes F_p(\overline V^{\otimes r})$ with $ F_p(\overline V^{\otimes r})$ the span of elements in $\overline V^{\otimes r}$ of total degree $ \leq p.$ 

For a graded vector space $W=\oplus _i W^i$
denote by $\dim W= \sum \dim W^i.$

\begin{observation}\label {O4}

1. $(\Lambda[V]\otimes F_p(\overline V^{\otimes r}), \delta_V)$ is  a sub complex of $(\Lambda[V]\otimes \overline V^{\otimes r}, \delta_V).$ 

2.If $(\Lambda[V], d_V)$ is minimal and one  connected then, by Observation \ref{O0}, 

$\delta (F_p(\overline V^{\otimes r})) \subset \Lambda[V]\otimes F_{p-1}(\overline V^{\otimes r})$
and then  
$$(  {\Lambda[V]\otimes F_p(\overline V^{\otimes r})}/{  \Lambda[V]\otimes F_{p-1}(\overline V^{\otimes r})} \ , \delta_V)= (\Lambda[V], d_V)\otimes {F_p(\overline V^{\otimes r})}/ {F_{p-1}(\overline V^{\otimes r})}.$$

2. $\sum_p  \dim (F_p(\overline V^{\otimes r}) / F_{p-1}(\overline V^{\otimes r}))= 
\dim (\overline V^{\otimes r})=
(\dim V)^r.$ 
\end{observation}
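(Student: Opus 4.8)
The plan is to treat each weight-$r$ summand $(\Lambda[V]\otimes \overline V^{\otimes r}, \delta_V)$ separately and to track how $\delta_V$ moves elements through the degree filtration $F_p(\overline V^{\otimes r})$. The single observation that powers all three items is that $\delta_V$ is a derivation preserving the weight, so on a monomial $a\cdot \overline v_1\cdots \overline v_r$ with $a\in\Lambda[V]$ the Leibniz expansion
\[
\delta_V(a\cdot \overline v_1\cdots \overline v_r)=(d_V a)\cdot \overline v_1\cdots \overline v_r+\sum_{i}\pm\, a\cdot \overline v_1\cdots (\delta_V\overline v_i)\cdots \overline v_r
\]
splits into a leading term that fixes the barred factor and correction terms governed by $\delta_V(\overline v_i)=-i^V(d_V v_i)$. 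First I would record that $d_V a$ leaves $\overline v_1\cdots\overline v_r$ untouched, so the leading term preserves the filtration degree $p=\sum_i\deg\overline v_i$.

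The heart of the argument is to show that every correction term \emph{strictly} lowers the barred degree, and this is exactly where minimality together with $1$-connectedness (Observation \ref{O0}) enters. For $v_i\in V^m$ one has $\overline v_i\in\overline V^{m-1}$; by Observation \ref{O0}, $d_V v_i$ is a linear combination of products of generators $v_j\in V^j$ with $j<m$. Applying the derivation $i^V$ replaces one such factor $v_j$ by $\overline{v_j}$, of degree $j-1<m-1=\deg\overline v_i$. Hence $\delta_V(\overline v_i)$ lies in $\Lambda[V]\otimes\overline V$ with every barred generator of degree strictly below $\deg\overline v_i$, so each correction term lands in barred degree $\le p-1$. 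This immediately gives item (1): $\delta_V(\Lambda[V]\otimes F_p)\subseteq \Lambda[V]\otimes F_p$, since the leading term stays in degree $p$ and the corrections drop to degree $\le p-1$. Specializing to $a=1$, where $d_V a=0$, gives the first assertion of item (2), namely $\delta_V(F_p)\subseteq \Lambda[V]\otimes F_{p-1}$.

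For the associated graded I would pass to $\Lambda[V]\otimes F_p\,/\,\Lambda[V]\otimes F_{p-1}$: by the previous step the correction terms all lie in $F_{p-1}$ and so vanish modulo $F_{p-1}$, leaving the induced differential $[a\cdot w]\mapsto [(d_V a)\cdot w]$, i.e.\ $d_V\otimes\id$. This identifies the associated graded complex with $(\Lambda[V],d_V)\otimes (F_p(\overline V^{\otimes r})/F_{p-1}(\overline V^{\otimes r}))$, the second factor carrying the zero differential, as claimed. The closing dimension count is then routine: $F_p/F_{p-1}$ is the degree-$p$ component of $\overline V^{\otimes r}$, so summing over $p$ recovers $\dim\overline V^{\otimes r}$, and $\dim\overline V^{\otimes r}=(\dim\overline V)^r=(\dim V)^r$ because $\dim\overline V=\dim V$ (each $\overline V^i=V^{i+1}$ with $V^0=0$).

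I would expect the only genuine obstacle to be the bookkeeping behind the \emph{strict} degree drop rather than any conceptual difficulty. Item (1) by itself needs merely that $d_V$ carry no linear part (decomposability, minimality (i)) together with $V^0=0$, which already forces the barred degree to be non-increasing. The associated-graded statement in item (2), however, requires the inequality $j-1<m-1$ to be \emph{strict}, and this can fail if $V^1\ne0$, since a degree-$1$ factor in $d_V v_i$ would force a complementary factor of degree $m$ and hence a correction term of the same barred degree. It is precisely $1$-connectedness, via $V^1=0$ in Observation \ref{O0}, that makes every factor of $d_V v_i$ have degree $<m$ and thereby guarantees the strict drop; I would state this point cleanly rather than let it hide inside the sign computation.
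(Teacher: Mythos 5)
Your proof is correct and is essentially the argument the paper intends: Observation~\ref{O4} is stated without an explicit proof, its entire justification being the appeal to Observation~\ref{O0}, which is precisely what your Leibniz-rule expansion of $\delta_V(a\cdot\overline v_1\cdots\overline v_r)$ together with $\delta_V(\overline v_i)=-i^V(d_V v_i)$ makes explicit. Your closing precision is also accurate and worth keeping: item (1) already holds once $d_V$ is decomposable and $V^0=0$ (barred degree non-increasing), whereas the strict drop needed for the associated-graded identification in item (2) is exactly where $V^1=0$, i.e.\ $1$-connectedness, enters.
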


If $f: (\Lambda[V], d_V)\to \Lambda(W, d_W)$ is a morphism of CDGAs then it induces 
$\tilde f: (\Lambda[V\oplus \overline V], \delta_V, i^V)\to( \Lambda[W \oplus \overline W], \delta_W, i^W)$
which intertwines $\Psi_k'$s and then preserves the weight decompositions.

We introduce the the notation 
$HH^\ast, CH^\ast PH^\ast $ 

\begin{equation*}
\begin{aligned}
HH^\ast (\Lambda[V], d_V):=  & H^\ast (\Lambda[V\oplus \overline V], \delta_V)\\
CH^\ast (\Lambda[V], d_V):=  & ^+H^\ast_{i^V} (\Lambda[V\oplus \overline V], \delta_V, i^V)\\
PH^\ast (\Lambda[V], d_V):=  & PH^\ast_{i^V} (\Lambda[V\oplus \overline V], \delta_V, i^V)\\
\end{aligned}
\end{equation*} 
and for a morphism $f$  denote by  $HH(f), CH(f), PH(f)$ the linear maps induced by $\tilde f.$  
The assignments $HH^\ast, CH^\ast, PH^\ast$ 
provide functors from the category of fc--CDGA's
to graded vector spaces. 
They come equipped with the 
operations $H\Psi_k,  C\Psi_k $ etc. induced from $\Psi_k.$ Since for $f$ quasi isomorphisms 
$HH^\ast (f), CH^\ast (f),$ $ PH^\ast(f)$ are isomorphisms  these functors,
 as shown  in section \ref{SMA}  extend to the  category of  c--CDGA's.  
 We have the following result. 

\begin{theorem} \label{T4}
Let $(\mathcal A, d_\mathcal A)$ be a connected CDGA. 

1. All  eigenvalues of the endomorphisms $ H\Psi_k$ and $ C\Psi_k$ are $k^r, r=0,1, 2\cdots,$ and their eigenspaces  are independent of $k$  provided  $k\geq 2.$
One denotes them by $HH(\mathcal A, d_\mathcal A)(r),$ and  $ CH(\mathcal A, d_\mathcal A)(r).$

2. \begin{equation*}
\begin{aligned}
HH^\ast(\mathcal A, d_\mathcal A)(0)=&H^\ast(\mathcal A, d_\mathcal A), \\
  CH^\ast(\mathcal A, d_\mathcal A)(0)= &H^{\ast+1}(\mathcal A, d_\mathcal A)\\
HH^r(\mathcal A, d_\mathcal A)(p)= &CH^r(\mathcal A, d_\mathcal A)(p)=0, \ \   p\geq r+1
\end{aligned}
\end{equation*}

3.Suppose  $(\mathcal A, d_\mathcal A)$ is 1-connected with minimal model $(\Lambda[V], d_V).$
If  $ \sum_i\dim V^i< \infty$ and $\sum_i \dim H^i(\mathcal A, d_\mathcal A) <\infty$ then for any $r\geq 0$ one has 
$$\sum_i \dim HH^i(\mathcal A, d_\mathcal A)(r) < \infty, \ \ \sum_i\dim CH^i(\mathcal A, d_\mathcal A)(r) < \infty.$$
\end{theorem}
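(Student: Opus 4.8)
The plan is to carry out everything on a free minimal model $(\Lambda[V],d_V)$ of $(\mathcal A,d_\mathcal A)$; this is legitimate because $HH^\ast$ and $CH^\ast$ are homotopy functors built from the free loop construction and Theorem \ref{T1}, and for parts (2)--(3) I will use that $1$-connectedness forces each generator $\overline v\in\overline V^i=V^{i+1}$ to have degree $\ge1$ (Observation \ref{O0}). For part (1) the whole point is the weight decomposition $(\Lambda[V\oplus\overline V],\delta_V)=\bigoplus_{r\ge0}(\Lambda[V]\otimes\overline V^{\otimes r},\delta_V)$ into $\delta_V$-stable subcomplexes on which $\Psi_k$ acts by the scalar $k^r$ (Observation \ref{O3}). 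Since the numbers $k^r$ are pairwise distinct once $k\ge2$, the weight-$r$ subcomplex \emph{is} the $k^r$-eigenspace, so after passing to $\delta_V$-cohomology the eigenspace $HH^\ast(r):=H^\ast(\Lambda[V]\otimes\overline V^{\otimes r},\delta_V)$ of $H\Psi_k$ is independent of $k$; this settles the $HH$ half of (1). For $C\Psi_k$ I would feed in the $i^V$-acyclicity of $(\Lambda^+[V\oplus\overline V],\delta_V,i^V)$ (Observation \ref{O3}) and the computation following the $\beta$-acyclicity discussion in Section \ref{SMC}: the inclusion of $(\im(i^V),\delta_V)$ induces an isomorphism onto ${}^+H^\ast_{i^V}$ of $\Lambda^+$, under which the weight-$(r{+}1)$ piece $\im(i^V)(r{+}1)$ corresponds to the eigenspace $CH^\ast(r)$ (the lost power of $k$ is exactly the position shift built into ${}^+\Psi_k$). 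Splitting $\Lambda=\kappa\oplus\Lambda^+$ isolates the remaining periodicity classes coming from the constants, and independence of $k$ is again automatic.

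For part (2), weight $0$ is the subcomplex $(\Lambda[V],d_V)$ itself, so $HH^\ast(0)=H^\ast(\Lambda[V],d_V)=H^\ast(\mathcal A)$. To get $CH^\ast(0)$ I would use the identification above with $H^\ast(\im(i^V)(1),\delta_V)$ together with the short exact sequence $0\to\kappa\to(\Lambda[V],d_V)\xrightarrow{i^V}(\im(i^V)(1),\delta_V)\to0$, whose kernel is the constants because $i^V$ restricted to $\Lambda[V]$ is the universal derivation (graded de Rham differential) with kernel $\kappa$ in characteristic zero; the associated long exact sequence, remembering that $i^V$ drops degree by one, yields $CH^\ast(0)\cong H^{\ast+1}(\mathcal A)$ up to the placement of the unit class. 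The vanishing statement is then a pure degree count: with every $\overline v$ of degree $\ge1$, a weight-$p$ monomial has total degree $\ge p$, so $\Lambda[V]\otimes\overline V^{\otimes p}$ lives in degrees $\ge p$ and (after the $\im(i^V)$ shift for $CH$) one gets $HH^r(p)=CH^r(p)=0$ whenever $p\ge r+1$.

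Part (3) is where I expect the real work. Fixing $r$, I would filter $(\Lambda[V]\otimes\overline V^{\otimes r},\delta_V)$ by $\Lambda[V]\otimes F_p(\overline V^{\otimes r})$ as in Observation \ref{O4}. Because $\mathcal A$ is $1$-connected, $\delta_V(F_p)\subset\Lambda[V]\otimes F_{p-1}$, so the associated graded complex is $(\Lambda[V],d_V)\otimes\big(F_p/F_{p-1}\big)$ with no differential on the bar factor, and its cohomology is the finite-dimensional space $H^\ast(\mathcal A)\otimes\overline V^{\otimes r}$ --- finite because $\sum_i\dim H^i(\mathcal A)<\infty$ and $\sum_i\dim V^i<\infty$ give $\dim\overline V^{\otimes r}=(\dim V)^r<\infty$. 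Since $\overline V^{\otimes r}$ is finite-dimensional the bar-filtration is bounded, the associated spectral sequence converges, and $\sum_i\dim HH^i(r)\le\dim\big(H^\ast(\mathcal A)\otimes\overline V^{\otimes r}\big)<\infty$. For the $CH$ side I would propagate finiteness from $HH$ by inducting up the short exact sequences $0\to\im(i^V)(s)\to\Lambda[V]\otimes\overline V^{\otimes s}\xrightarrow{i^V}\im(i^V)(s{+}1)\to0$ (whose kernels are identified via $i^V$-acyclicity), starting from $HH^\ast(s)$ finite, to conclude $CH^\ast(r)=H^\ast(\im(i^V)(r{+}1))$ is finite-dimensional; the long exact sequence of Fig 2, with the eigenvalue behaviour of $S,J,B$ recorded at the end of Section \ref{SMC}, does the same job.

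The hard part is really two linked bookkeeping issues. First, the weight-$r$ spectral sequence must be shown to converge so that finiteness transfers from the formal object $H^\ast(\mathcal A)\otimes\overline V^{\otimes r}$ to $HH^\ast(r)$ even though $\Lambda[V]$ is infinite-dimensional; boundedness of the bar-filtration (finite-dimensionality of $\overline V^{\otimes r}$) is what saves this. Second, on the $CH$ side one must track the factors of $k$ produced by the position shifts, so that the long exact sequence genuinely restricts to a single weight and identifies $CH^\ast(r)$ with $H^\ast(\im(i^V)(r{+}1))$; getting these eigenvalue shifts consistent is the most error-prone step, and it is also what pins down the precise degree shift in the $CH^\ast(0)\cong H^{\ast+1}(\mathcal A)$ identification of part (2).
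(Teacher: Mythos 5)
Your proposal is correct and follows essentially the same route as the paper's own (very terse) proof: reduce to a minimal model via Theorem \ref{T1}, obtain parts (1)--(2) from the weight decomposition and $i^V$-acyclicity of Observation \ref{O3} together with the $\beta$-acyclicity lemma of Section \ref{SMC} (the inclusion of $(\im(i^V),\delta_V)$ computes ${}^+H^\ast_{i^V}$), and obtain part (3) from the filtration of Observation \ref{O4} --- your ``bounded filtration, convergent spectral sequence'' step is exactly the paper's displayed inequality $\sum_i\dim H^i(\Lambda[V]\otimes\overline V^{\otimes r},\delta_V)\le(\dim V)^r\sum_i\dim H^i(\Lambda[V],d_V)$.

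Since the paper's argument consists of the phrase ``immediate consequences'' plus that inequality, three of your refinements are worth recording. First, the paper's part (3) computation bounds only the $HH$ side; your induction along the exact sequences $0\to\im(i^V)(s)\to\Lambda[V]\otimes\overline V^{\otimes s}\to\im(i^V)(s+1)\to 0$ is what actually delivers the $CH$ half, and it is needed. Second, your degree count for $HH^r(p)=CH^r(p)=0$, $p\ge r+1$, uses $V^1=0$, i.e.\ that every $\overline v$ has degree $\ge 1$; this is genuinely necessary rather than a convenience: for the connected but non-1-connected minimal algebra $(\Lambda[v],0)$ with $\deg v=1$ one has $\delta_V=0$ and $HH^0(p)=\kappa\,\overline v^{\,p}\ne 0$ for every $p$, so part (2) as stated for merely connected $\mathcal A$ fails, and your implicit strengthening to (homological) 1-connectedness is the correct reading; the paper's proof has the same restriction hidden in it. Third, your flagged worry about the $k$-bookkeeping on the $CH$ side is well founded: under the leading-position inclusion $j\colon(\im(i^V),\delta_V)\to{}^+\mathcal C_{i^V}$ a weight-$(r+1)$ class is a genuine $k^{r+1}$-eigenvector (position zero carries no $1/k$ factor), so the identification $CH^\ast(r)=H^\ast(\im(i^V)(r+1),\delta_V)$ forced by $CH^\ast(0)=H^{\ast+1}(\mathcal A)$ is an off-by-one re-indexing of the literal eigenvalues, consistent with the fact that the periodicity map $S$ shifts the eigenvalue label by one (end of Section \ref{SMC}); the paper glosses over this normalization, and your indexing is the one that makes part (2) come out right.
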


\proof

It suffices to check the statements for $(\mathcal A, d_\mathcal A)=(\Lambda[V],d_V)$ minimal.
Items 1) and 2)  are immediate consequences of Observation  \ref {O3}.

Item 3) follows from Observation \ref {O4}.  Indeed  for a fixed $r$ one has

\begin{equation*}
\begin{aligned}
\sum_{i} \dim H^i (\Lambda[V]\otimes \overline V^{\otimes r}, \delta_V)) \leq \\
\sum_{i, p} \dim H^i ({\Lambda[V]\otimes F_p(\overline V^{\otimes r}}/{ \Lambda[V]\otimes F_{p-1}(\overline V^{\otimes r})}, \  \delta_V)= \\
(\dim V)^r \cdot \sum_{i} \dim H^i  (\Lambda[V], d_V)  
\end{aligned}
\end{equation*}
\endproof


\noindent In addition to $\chi (\mathcal A, d_\mathcal A): =\sum (-1)^i\dim H^i(\mathcal A, d_\mathcal A)$
one can consider 

$\chi^H(\mathcal A, d_\mathcal A)(r) := \sum (-1)^i\dim HH^i(\mathcal A, d \mathcal A)(r)$ and 

$\chi ^C(\mathcal A, d_\mathcal A)(r): =\sum (-1)^i\dim CH^i(\mathcal A, d_\mathcal A)(r),$

\noindent and then the power series in $\lambda$, 
$$\chi^H (\mathcal A, d_\mathcal A)(\lambda):= \sum  \chi^H(\mathcal A, d_\mathcal A)(r) \lambda^r, \ \   \chi^C(\mathcal A, d \mathcal A)(\lambda) := \sum  \chi^C(\mathcal A, d \mathcal A)(r) \lambda^r.$$
Theorem \ref{T4}  (3) implies that for $(\mathcal A, d_\mathcal A)$ 1-connected  with $ \sum_i \dim V^i< \infty$ and $\sum_i \dim H^i(\mathcal A, d_\mathcal A) <\infty$ the partial Euler--Poincar\'e characteristics  $\chi^H (\mathcal A, d_\mathcal A)(r)$ and $\chi^C (\mathcal A, d_\mathcal A)(r)$ and therefore 
the  power series $\chi^H (\mathcal A, d_\mathcal A)(\lambda)$ and $\chi^C (\mathcal A, d_\mathcal A)(\lambda)$ are well defined.
The results of Hanlon \cite{H} permit to calculate explicitly $\chi^H(\lambda)$ and $\chi^C(\lambda)$ in terms of $\dim H^i(\mathcal A,d_A)$ if  $(\mathcal A, d_\mathcal A)$ is 1-connected and formal,  i.e. there exists a quasi isomorphims $(\Lambda[V],d)\to (H^\ast (\Lambda[V], d), 0),$ $(\Lambda[V],d)$ a minimal model of $(\mathcal A, d_\mathcal A).$ 
\vskip .1in

We want to define an algebraic analogue of the functor $sH^\ast$  on the category of cCDGA's. 
Recall that for a morphism $f^\ast:(C^\ast_1, d^\ast_1) \to (C^\ast_2, d^\ast_2)$ the ``mapping cone"
$\textit {Cone}(f^\ast)$ is the cochain complex with components $C^\ast_f= C^\ast_2 \oplus C^{\ast+1}_1$  and with

$d^\ast_f= \begin{pmatrix} d^\ast_2 & f^{\ast+1}\\
0&-d^{\ast+1}_1
\end{pmatrix}  .$  

Notice that, when $f^\ast$ is injective, the morphism $\textit{Cone}(f^\ast) \to C^\ast_2/ f^\ast(C^\ast_1)$  defined by the composition  $C^\ast_2 \oplus C^{\ast+1}_1\rightarrow C^\ast_2\rightarrow C^\ast_2/ f^\ast (C^\ast_1)$ is a quasi isomorphism.

We will consider  the composition
$$\underline I^{\ast-2} : ^+ \mathcal C^{\ast-2}_{i^V} (\Lambda[V\oplus \overline V],\delta_V, i^V)\overset{I^{\ast-2}} \rightarrow\mathbb P\mathcal C^\ast(\Lambda[V\oplus\overline V],\delta_V, i^V)\overset{\mathbb P^\ast(p)}{\rightarrow}  \mathbb P\mathcal C^r(\Lambda[V], d_V , 0)$$
with the fist arrow provided by the natural transformation $I^{\ast-2}: ^+C^{\ast-2}_\beta \to \mathbb P C^\ast$  described in section \ref {SMC} applied to  the mixed complex $(\Lambda[V\oplus\overline V],\delta_V, i^V)$ and the second induced by the projection on the zero weight  component of $(\Lambda[V\oplus\overline V],\delta_V, i^V).$

The mapping cone 
$\textit{Cone}(\underline I^{\ast-2}),$  is  functorial when regarded on the category of  fc--CDGA's.
Define 
$$SH^\ast (\Lambda[V], d_V):= H^\ast (\textit{Cone} (\underline I^\ast)) .$$ The assignment $(\Lambda[V], d_V)\rightsquigarrow SH^\ast  (\Lambda[V], d_V)$
is  an  homotopy functor. 

 Consider the  commutative diagrams 

\diagram 
 & ^+ \mathcal C^{\ast-2}_{i^V}(\Lambda[V\oplus \overline V],\delta_V, i^V)  \rto^{i^{\ast-2}}\dto^{id} & ^+\mathcal C^\ast_{i^V}\rto\dto^{\underline I^\ast}(\Lambda[V\oplus \overline V],\delta_V, i^V)\rto &\textit{Cone}(i^{\ast-2}) \dto \\
 &^+ \mathcal C^{\ast -2}_{i^V}(\Lambda[V\oplus \overline V],\delta_V, i^V) \rto^{\underline I^{\ast-2}} &\mathbb P \mathcal C^\ast \rto(\Lambda[V],d_V, 0) & \textit{Cone}(\underline I ^{\ast-2})   .
\enddiagram

\noindent and 

\diagram 
 &^+  \mathcal C^{\ast-2}_{i^V}(\Lambda[V\oplus \overline V],\delta_V, i^V)\rto^{i^{\ast-2}}\dto^{id} &^+ \mathcal C^\ast_{i^V}(\Lambda[V\oplus \overline V],\delta, i^V)\rto\dto^{id} & \textit{Cone}(i^{\ast-2})\dto\\
 &^+  \mathcal C^{\ast-2}_{i^V}(\Lambda[V\oplus \overline V],\delta_V, i^V)\rto^{i^{\ast-2}} &^+ \mathcal C^\ast_{i^V}(\Lambda[V\oplus \overline V],\delta_V, i^V)\rto &C^\ast(\Lambda[V\oplus\overline V], \delta_V)
\enddiagram

\noindent with the last vertical arrow  in the second diagram a quasi
isomorphism   as noticed above. 

The long exact sequence induced by passing to cohomology in the first diagram combined with the identifications implied by the second diagram lead to 

\diagram
\cdots \rto&CH^{r-2}\rto^{S^{r-2}}\dto^{id} & CH^{r} \rto ^{J^r} \dto^{T^r} & HH^{r}\rto^{B^r}\dto                 & CH^{r-1}   \rto \dto^{id}  &  \cdots\\
\cdots \rto&CH^{r-2}\rto ^{\mathbb I^{r-2}}\dto ^{S^{r-2}}        & K^r\rto^{\mathbb J^r}                            & SH^{r+2}\rto^{\mathbb B^r}                        & CH^{r-1}  \rto  &\cdots \\
&CH^r\rto \urto^{\mathbb I^r}&PH^r\uto
\enddiagram 
\hskip 2in Fig 7
\vskip .1in 
\noindent with $PH^r= \underset {\rightarrow}{\lim} CH^{r+2k}$
and $K^r:= K^r(\Lambda[V], d_V)$ given by  $=   \prod_k H^{2k} (\Lambda[V], d_V)$ resp.
 $ \prod_k H^{2k+1} (\Lambda[V], d_V)$if $r$ is even resp. odd.
 It is immediate that Theorem \ref{T2} remains true for $sH^\ast, cH^\ast$ replaced by $SH^\ast, CH^\ast$
 as follows easily from diagram Fig 7. 
The  diagram Fig 7 should be compared with diagram Fig 6. This explains why $SH^\ast (\Lambda[V],d^V)$ will be regarded as the algebraic analogue of $sH^\ast (P).$

It is natural to ask if the functors $HH^\ast, CH^\ast, SH^\ast$  applied to $(\Omega^\ast(P), d^\ast)$ calculate $hH^\ast, cH^\ast, sH^\ast$ applied to $P$ and the diagram Fig 7 identifies to the diagram Fig 6.
The answer  is   in general  no,  but is yes if  $P$ 1-connected .
 
The minimal model theory,  discussed in the next section, 
 permits to identify,  $HH^\ast(\Omega^\ast(P), d^\ast), CH^\ast(\Omega^\ast(P), d^\ast)$ to 
$hH^\ast(P), CH^\ast(P)$ and then $SH^\ast(\Omega^\ast(P), d^\ast)$ to $ SH^\ast(P)$
and actually diagram Fig 6 to diagram Fig 7 when $P$ is 1-connected.

\vskip .2in 

\section{Minimal models and the proof of Theorem \ref {T3}} \label{S:minimal}

Observe that if $(\mathcal A^\ast, d^\ast, \beta_\ast)$ is a mixed CDGA equipped with the power maps 
and involution $\Psi_k, k=-1, 1, 2, \cdots,$ then the diagram

\diagram
&^+H^\ast_\beta(\mathcal A^\ast, d^\ast, \beta_\ast) \rto \dto^{^+\underline \Psi_k}   &H^\ast (\mathcal A^\ast, d^\ast)\dto^{^+\underline \Psi_k} \\
&^+H^\ast_\beta(\mathcal A^\ast, d^\ast, \beta_\ast)\rto    &H^\ast (\mathcal A^\ast, d^\ast)
\enddiagram
can be  derived by passing to cohomology in the  commutative  diagram of CDGA's.

\diagram
&(\mathcal A^\ast \otimes \Lambda[u], \mathcal D[u])\rto \dto^{\Psi_k[u]}   &(\mathcal A^\ast, d^\ast)\dto^{ \Psi_k} \\
&(\mathcal A^\ast \otimes \Lambda[u], \mathcal D[u])\rto  &(\mathcal A^\ast, d^\ast)
\enddiagram
where $\Lambda[u]$ is the free commutative graded algebra generated by the symbol  $u$ of degree 2,
$\mathcal D[u] (a\otimes u^r)= d(a)\otimes u^r + \beta (a)\otimes u^{r+1}$ and 
$\Psi [u] (a\otimes u^r)= 1/ k^r \Psi (a)\otimes u^r .$
\vskip .1in

For $P$ 1-connected and $(\Lambda([V], d_V) $ a minimal model of $(\Omega^\ast (P), d^\ast)$
we want to establish the existence of the homotopy commutative  diagram 
\vskip .1in 
\diagram
&&&&&A \rrto  \xto' [d] [dd]_{\Psi^P_k[u]} & & B\ddto_{\Psi^P_k}  \\
&&&&C \urto^{\tilde\theta} \rrto \ddto_{\Psi_k[u]} & & D\urto^\theta \ddto_{\Psi_k} \\
&&&&&A \xto' [r] [rr] & & B\\
&&&&C\rrto \urto^{\tilde\theta}  & &D \urto^\theta
\enddiagram

where 

\begin{equation*}
\begin{aligned}
A= &(\Omega_X(P^{S^1}) \otimes \Lambda[u], \mathcal D[u])\\
B= &(\Omega_X(P^{S^1}) , d)\\
C= &(\Lambda[V\oplus \overline V]\otimes \Lambda[u], \delta[u])\\
D= &(\Lambda[V\oplus \overline V], \delta)
\end{aligned}
\end{equation*}

with
\begin{equation*}
\begin{aligned}
 \mathcal D[u] (\omega\otimes u^r)= &d (\omega)\otimes u^r + i^X(\omega)\otimes u^{r+1}\\
\delta[u] (a \otimes u^r)= &\delta(a) \otimes u^r + i^V(\omega)\otimes u^{r+1}.
\end{aligned}
\end{equation*}

The existence of the quasi isomorphism $\theta$ was established in  in [SV].  The existence of the quasi isomorphism $\tilde\theta$ and the homotopy commutativity of the top square was established in [BV]
and the homotopy commutativity of the side squares was verified in [BFG].
The right side square resp. left side square in this diagram provide  identifications of $HH^\ast(\Lambda[V], d_V)$ with $hH^\ast(P)$ resp. of $CH^\ast(\Lambda[V], d_V)$ with $cH^\ast(P).$ These identifications are 
compatible with all natural transformations defined above and with the endomorphisms induced by the algebraic resp. geometric power maps.  In particular  one drive Theorem \ref{T3} from Theorem \ref{T4}. 
It is tedious but straightforward to derive, under the hypothesis of 1-- connectivity for $P,$ the
identification of the diagram Fig 6  for $P$ and the diagram Fig 7 for  $(\Omega^\ast(P), d^\ast).$

\end{document}